 \newtheorem{theorem}{Theorem}[section]
 \newtheorem{proposition}[theorem]{Proposition}
 \newtheorem{lemma}[theorem]{Lemma}
 \newtheorem{definition}{Definition}[section]
 \newtheorem{assumption}{Assumption}
 \theoremstyle{remark}
 \newtheorem{remark}{Remark}
\begin{document}

\title[]{Convergence of multi-block
Bregman ADMM \\ for nonconvex  composite problems}

\date{}

\maketitle

\begin{center}
Fenghui~Wang, Wenfei~Cao, Zongben~Xu\footnote{Corresponding author (zbxu@mail.xjtu.edu.cn)}\\[0.2in]

\address{\small School of Mathematics and Statistics, Xi'an Jiaotong University, Xi'an, 710049, China }
\end{center}

\begin{abstract}
The alternating direction method with multipliers (ADMM) has been
one of most powerful and successful methods for solving  various
composite problems. The convergence of the conventional ADMM (i.e.,
2-block) for convex objective functions has been justified for a
long time, and its convergence for nonconvex objective functions
has, however, been established very recently. The multi-block ADMM,
a natural extension of ADMM, is a widely used scheme and has also
been found very useful in solving various nonconvex optimization
problems. It is thus expected to establish  convergence theory of
the multi-block ADMM under nonconvex frameworks. In this paper we
present a Bregman modification of 3-block ADMM and establish its
convergence for a large family of nonconvex functions. We further extend
the convergence results to the $N$-block case ($N \geq 3$),
which underlines the feasibility of multi-block ADMM applications in
nonconvex settings. Finally, we present a simulation study and a
real-world application to support the correctness of the obtained
theoretical assertions.\\

\noindent{\sc Keywords}: nonconvex regularization, alternating
direction method, subanalytic function, K-L inequality, Bregman
distance.
\end{abstract}

\section{Introduction}

Many problems arising in the fields of  signal \& image processing
and machine learning \cite{bpc,yl} involve finding a minimizer of
the sum of $N~(N\ge2)$ functions with linear equality constraint. If
$N=2$, the problem then consists of solving
\begin{align}\label{p11}
\min & \ f(x)+g(y)\nonumber\\
 \mathrm{s.t.} & \  Ax+By=0
\end{align}
where $A\in\mathrm{R}^{m\times n_1}$ and $B\in\mathrm{R}^{m\times
n_2}$ are given matrices,  $f:\mathrm{R}^{n_1}\to\mathrm{R}$  is a
proper lower semicontinuous function, and
$g:\mathrm{R}^{n_2}\to\mathrm{R}$ is a smooth function. Because of
its separable structure, problem (\ref{p11}) can be efficiently
solved by ADMM, namely, through the procedure
\begin{align}\label{admm}
\begin{split}
\left\{
  \begin{array}{ll}
   x^{k+1}=\arg\min\limits_{x\in\mathrm{R}^{n_1}} L_{\alpha}(x,y^{k},p^k)  \\
y^{k+1}=\arg\min\limits_{y\in\mathrm{R}^{n_2}} L_{\alpha}(x^{k+1},y,p^k)  \\
p^{k+1}=p^k+\alpha(Ax^{k+1}+By^{k+1})
  \end{array}
\right.
\end{split}
\end{align}
where $\alpha$ is a penalty parameter and
\begin{align*}
L_{\alpha}(x,y,p):=f(x)+g(y)+\langle p,
Ax+By\rangle+\frac\alpha2\|Ax+By\|^2
\end{align*}
is the associated augmented Lagrangian function with multiplier $p$.
So far, various variants of the conventional ADMM have been
suggested. Among such varieties, Bregman ADMM (BADMM) is the one
designed to improve the performance of procedure \eqref{admm}
\cite{fwb,wb,wang,zhang}.  More specifically, BADMM takes the
following iterative form:
\begin{align}\label{badmm}
\begin{split}
 x^{k+1}&=\arg\min\limits_{x\in\mathrm{R}^{n_1}} L_{\alpha}(x,y^{k},p^k)+\triangle_{\phi}(x,x^k) \\
y^{k+1}&=\arg\min\limits_{y\in\mathrm{R}^{n_2}} L_{\alpha}(x^{k+1},y,p^k)+\triangle_{\psi}(y,y^k) \\
p^{k+1}&=p^k+\alpha(Ax^{k+1}+By^{k+1}),
\end{split}
\end{align}
where $\triangle_{\phi}$ and $\triangle_{\psi}$ are the Bregman
distance with respect to functions $\phi$ and $\psi,$  respectively.

ADMM was introduced in the early 1970s \cite{gm2,gm},  and its
convergence properties for convex objective functions have been
extensively studied. The first convergent result was established for
strongly convex functions \cite{gm2,gm}, and then extended to
general convex functions \cite{eck,ecb}.  It has been shown that
ADMM can converge at a sublinear rate of  $\mathcal{O}(1/k)$
\cite{hy,ms}, and $\mathcal{O}(1/k^2)$ for the accelerated version
\cite{gds}. The convergence of BADMM for convex objective functions
has also been examined with the Euclidean distance \cite{ct},
Mahalanobis distance \cite{zhang}, and the general Bregman distance
\cite{zhang}.

Recently, there has been an increasing interest in the study of
ADMM for nonconvex objective functions. On one hand,  the ADMM
algorithm is highly successful in solving various nonconvex examples
ranging from nonnegative matrix factorization, distributed matrix
factorization, distributed clustering, sparse zero variance
discriminant analysis, polynomial optimization, tensor
decomposition, to matrix completion (see e.g.
\cite{hcw,ls,xyw,zk,zy}). On the other hand, the convergence
analysis of nonconvex ADMM
 is generally very difficult, due to the failure of the F\'{e}jer
monotonicity of iterates. In \cite{hlr}, the subsequential
convergence of ADMM for general nonconvex functions has been proved.
Furthermore, the global convergence of ADMM for certain type of
nonconvex functions has been proved in \cite{lp,wxx}.

The purpose of the present study is to examine convergence of ADMM
with 3 blocks (i.e., $N=3$). The obtained results then can naturally
be generalized  to the case of ADMM with multiple blocks. Thus, in
the present paper we first consider the following 3-block composite
optimization problem:
\begin{align}\label{p1}
\min & \ f(x)+g(y)+h(z)\nonumber\\
 \mathrm{s.t.} & \  Ax+By+Cz=0
\end{align}
where $A\in\mathrm{R}^{m\times n_1},B\in\mathrm{R}^{m\times n_2}$
and $C\in\mathrm{R}^{m\times n_3}$ are given matrices,
$f:\mathrm{R}^{n_1}\to\mathrm{R},g:\mathrm{R}^{n_2}\to\mathrm{R}$
are proper lower semicontinuous functions, and
$h:\mathrm{R}^{n_3}\to\mathrm{R}$ is a smooth function. To solve
such a problem, it is natural to extend the ADMM to the following
form:
 \begin{align}\label{B5}
\begin{split}
\left\{
   \begin{array}{ll}
x^{k+1}=\arg\min\limits_{x\in\mathrm{R}^{n_1}} L_{\alpha}(x,y^{k},z^k,p^k)\\
y^{k+1}=\arg\min\limits_{y\in\mathrm{R}^{n_2}} L_{\alpha}(x^{k+1},y,z^k,p^k)   \\
z^{k+1}=\arg\min\limits_{z\in\mathrm{R}^{n_3}} L_{\alpha}(x^{k+1},y^{k+1},z,p^k)    \\
p^{k+1}=p^k+\alpha(Ax^{k+1}+By^{k+1}+Cz^{k+1})
   \end{array} \right.
\end{split}
\end{align}
where the augmented Lagrangian function
$L_{\alpha}:\mathrm{R}^{n_1}\times\mathrm{R}^{n_2}
 \times\mathrm{R}^{n_3}\times\mathrm{R}^{m}\to  \mathrm{R}$ is defined by
\begin{align}\label{lg}
L_{\alpha}(x,y,z,p):=f(x)+g(y)+h(z)+\langle p,
Ax+By+Cz\rangle+\frac\alpha2\|Ax+By+Cz\|^2.
\end{align}
Unlike the conventional  ADMM with 2 blocks, the convergence of
algorithm \eqref{B5}, called the 3-block ADMM henceforth, has
remained unclear even for convex objective functions. Although it is
not necessarily convergent in general \cite{ch}, the 3-block ADMM
does converge under some restrictive conditions; for example, under
the strong convexity condition of all objective functions  (see e.g.
\cite{hy}). Recently,  Li, Sun, and Toh \cite{lst} proposed a
modification of algorithm \eqref{B5}, called the semi-proximal
3-block ADMM as follows
 \begin{align}\label{B6}
\begin{split}
\left\{
   \begin{array}{ll}
x^{k+1}=\arg\min\limits_{x\in\mathrm{R}^{n_1}} L_{\alpha}(x,y^{k},z^k,p^k)+\frac12\|x-x^k\|^2_{T_1}\\
y^{k+1}=\arg\min\limits_{y\in\mathrm{R}^{n_2}} L_{\alpha}(x^{k+1},y,z^k,p^k)+\frac12\|y-y^k\|^2_{T_2}   \\
z^{k+1}=\arg\min\limits_{z\in\mathrm{R}^{n_3}} L_{\alpha}(x^{k+1},y^{k+1},z,p^k)+\frac12\|z-z^k\|^2_{T_3}    \\
p^{k+1}=p^k+\alpha(Ax^{k+1}+By^{k+1}+Cz^{k+1})
   \end{array} \right.
\end{split}
\end{align}
where $\|\cdot\|_{T_i}$ denotes ellipsoidal norms, $i=1,2,3$. They
proved the convergence of the algorithm when $f,g,h$ are all convex
and one of them is at least strongly convex.

Motivated by Bregman ADMM, we propose to use the following 3-block
Bregman ADMM for solving the optimization problem  \eqref{p1}:
 \begin{align}\label{B7}
\begin{split}
\left\{
   \begin{array}{ll}
x^{k+1} =\arg\min\limits_{x\in\mathrm{R}^{n_1}} L_{\alpha}(x,y^{k},z^k,p^k)+\triangle_{\phi}(x,x^k)  \\
y^{k+1} =\arg\min\limits_{y\in\mathrm{R}^{n_2}} L_{\alpha}(x^{k+1},y,z^k,p^k)+\triangle_{\psi}(y,y^k) \\
z^{k+1} =\arg\min\limits_{y\in\mathrm{R}^{n_3}} L_{\alpha}(x^{k+1},y^{k+1},z,p^k)+\triangle_{\varphi}(z,z^k)  \\
p^{k+1} =p^k+\alpha(Ax^{k+1}+By^{k+1}+Cz^{k+1})
 \end{array} \right.
\end{split}
\end{align}
where, as mentioned before, $\triangle_{\phi}, \triangle_{\psi}$ and
$ \triangle_{\varphi}$ are the Bregman distance associated with
functions $\phi,\psi,$ and $\varphi$,  respectively. In the present
paper, our aim is to justify the convergence of 3-block BADMM under
nonconvex frameworks. We will show that the 3-block BADMM can
converge if the objective function is subanalytic and matrix $C$ has
full-row rank.

\section{Preliminaries}

In what follows, $\mathrm{R}^n$ will stand for the $n$-dimensional
Euclidean space,
$$\langle x,y\rangle=x^{\top}y=\sum _{i=1}^n
x_iy_i, \ \|x\|=\sqrt{\langle x, x\rangle},$$ where
$x,y\in\mathrm{R}^n$ and $\top$ stands for the transpose operation.

\subsection{Subdifferentials}

Given a function $f:\mathrm{R}^n\to \mathrm{R}$ we denote by
$\mathrm{dom}f$ the domain of $f$, namely, $\mathrm{dom}f:=\{x\in
\mathrm{R}^n: f(x)<+\infty \}$. A function $f$ is said to be proper
if $\mathrm{dom}f\neq\emptyset;$ lower  semicontinuous at the point
$x_0$ if
$$\liminf_{x\to x_0}f(x)\ge f(x_0).$$
 If $f$ is lower semicontinuous at every point of its domain of definition, then it is simply
 called a lower semicontinuous function.

\begin{definition}
Let $f:\mathrm{R}^n\to \mathrm{R}$ be a proper lower semi-continuous
function.
\begin{itemize}
  \item[(i)] Given $x\in \mathrm{dom} f,$ the Fr\'{e}chet subdifferential of $f$ at $x$,
  written by $\widehat{\partial} f(x)$, is the set of  all elements $u\in \mathrm{R}^n$ which satisfy
  \begin{align*}
\lim_{y\neq x}\inf_{y\to x}\frac{f(y)-f(x)-\langle u,
y-x\rangle}{\|x-y\|}\ge0.
  \end{align*}
  \item[(ii)] The limiting  subdifferential, or simply subdifferential, of $f$ at $x$,
  written by $\partial f(x)$, is defined as
  \begin{align*}
\partial f(x)=\{u\in \mathrm{R}^n: \exists x^k\to x, f(x^k)\to f(x),\\
u^k\in\widehat{\partial} f(x^k)\to u, k\to\infty\}.
  \end{align*}
\item[(iii)]
A critical point or stationary point of $f$ is a point $x^*$ in the
domain of $f$ satisfying $0\in\partial f(x^*).$\end{itemize}
\end{definition}

\begin{definition}
An element $w^*:=(x^{*}, y^*,z^*, p^*)$ is  called a critical point
or stationary point of the Lagrangian function $L_{\alpha}$ defined
as in \eqref{lg} if it satisfies:
\begin{align}
\left\{
\begin{array}{l}
A^{\top}p^*\in-\partial f(x^*), \ B^{\top}p^*\in -\partial g(y^{*}), \\
C^{\top}p^*=-\nabla h(z^*), \ Ax^*+By^*+Cz^*=0.
  \end{array}
\right.
\end{align}
\end{definition}

The existence of proper lower semicontinuous functions and
properties of subdifferential can see~\cite{mor}. We particularly
collect the following basic properties of the subdifferential.
\begin{proposition}
Let $f:\mathrm{R}^n\to \mathrm{R}$ and $g:\mathrm{R}^n\to
\mathrm{R}$ be proper lower semi-continuous functions. Then the
following holds:
\begin{itemize}
  \item[(i)]   $\widehat{\partial} f(x)\subset  \partial  f(x)$ for each $x\in \mathrm{R}^n.$
  Moreover, the first set is closed and convex, while the second is closed, and not necessarily convex.
  \item[(ii)]  Let $(u^k, x^k)$ be sequences such that $x^k\to x, u^k\to u, f(x^k)\to f(x)$ and $u^k\in \partial f(x^k).$
  Then $u\in \partial f(x).$
  \item[(iii)] The Fermat's rule remains true: if $x_0\in\mathrm{R}^n$ is a local minimizer of $f$, then $x_0$ is a
  critical point or stationary point of $f$, that is, $0\in \partial f(x_0).$
  \item[(iv)] If $f$ is continuously differentiable function, then $\partial (f+g)(x)=\nabla f(x)+ \partial g(x).$
\end{itemize}
\end{proposition}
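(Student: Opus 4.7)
The plan is to derive all four items directly from the definitions, with (i)--(iii) being formal manipulations and the sum rule in (iv) requiring one Taylor expansion together with the diagonal closure from (ii).

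For (i), taking the constant sequence $x^{k}=x$, $u^{k}=u$ in the definition of the limiting subdifferential immediately yields $\widehat{\partial} f(x)\subset\partial f(x)$. Closedness and convexity of $\widehat{\partial} f(x)$ follow from a routine $\varepsilon$-$\delta$ argument applied to the defining liminf inequality: given $u_{1},u_{2}\in\widehat{\partial}f(x)$ and $\lambda\in[0,1]$, the smaller of the two threshold radii that witness $u_{1},u_{2}$ also works for $\lambda u_{1}+(1-\lambda)u_{2}$, and an analogous estimate shows closure under limits in $u$. The limiting subdifferential is closed (which is essentially what (ii) says) but need not be convex, as seen from $f(t)=-|t|$ at $t=0$.

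Item (ii) is a diagonal extraction. For each $k$ the definition of $u^{k}\in\partial f(x^{k})$ supplies sequences $x^{k,j}\to x^{k}$, $f(x^{k,j})\to f(x^{k})$, $u^{k,j}\in\widehat{\partial} f(x^{k,j})$ with $u^{k,j}\to u^{k}$. Choosing $j_{k}$ so that $\|x^{k,j_{k}}-x^{k}\|+|f(x^{k,j_{k}})-f(x^{k})|+\|u^{k,j_{k}}-u^{k}\|<1/k$ produces a single sequence certifying $u\in\partial f(x)$. Item (iii) is then immediate: at a local minimum $x_{0}$ the defining liminf quotient is non-negative with $u=0$, so $0\in\widehat{\partial}f(x_{0})\subset\partial f(x_{0})$ by (i).

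The substantive work is (iv). I would first verify the Fr\'echet-level sum rule $\widehat{\partial}(f+g)(x)=\nabla f(x)+\widehat{\partial}g(x)$ when $f\in C^{1}$ by inserting the expansion $f(y)=f(x)+\langle\nabla f(x),y-x\rangle+o(\|y-x\|)$ into the defining liminf in either direction. To upgrade to the limiting subdifferential, take $v\in\partial(f+g)(x)$ with approximating sequences $x^{k}\to x$, $(f+g)(x^{k})\to(f+g)(x)$, $v^{k}\in\widehat{\partial}(f+g)(x^{k})$, $v^{k}\to v$; set $w^{k}:=v^{k}-\nabla f(x^{k})\in\widehat{\partial}g(x^{k})$ via the Fr\'echet identity and observe that continuity of $f$ gives $f(x^{k})\to f(x)$, hence $g(x^{k})\to g(x)$ and $w^{k}\to v-\nabla f(x)$; part (ii) then yields $v-\nabla f(x)\in\partial g(x)$. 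The reverse inclusion is symmetric and slightly easier. The principal subtlety sits in the implication $g(x^{k})\to g(x)$: for two generically lower semicontinuous summands this need not follow from $(f+g)(x^{k})\to(f+g)(x)$, and it is precisely the continuity of the smooth summand $f$ that legitimises the subtraction $g(x^{k})=(f+g)(x^{k})-f(x^{k})$.
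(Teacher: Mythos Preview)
Your argument is correct in all four parts, and your identification of the key subtlety in (iv)---that continuity of the smooth summand $f$ is what allows one to peel off $g(x^{k})\to g(x)$ from $(f+g)(x^{k})\to (f+g)(x)$---is exactly the point that makes the limiting-level sum rule work.

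However, the paper does not supply its own proof of this proposition. It is stated as a collection of standard facts, with the sentence preceding it (``properties of subdifferential can see~[mor]'') deferring the reader to Mordukhovich's monograph \emph{Variational Analysis and Generalized Differentiation}. So there is no paper proof to compare against: you have written out what the authors chose to cite rather than prove. Your approach is the standard one found in that reference and in Rockafellar--Wets, so in that sense it is not a ``different route'' but simply the route the paper gestures at without walking.
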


 A function $f$ is said to be
 {\em  $\ell_{f}$-Lipschitz continuous}  $(\ell_{f}\ge 0)$ if
 $$\|f(x)-f(y)\|\le\ell_{f}\|x-y\|,$$
 for any $x,y\in\mathrm{dom}f$; {\em  $\mu$-strongly convex} $(\mu>0)$ if
 \begin{align}\label{B1}
 f(y)\ge f(x)+\langle\xi(x), y-x\rangle+\frac\mu2\|y-x\|^2,
 \end{align}
 for any $x,y\in\mathrm{dom}f$ and $\xi(x)\in\partial f(x);$ {\em
 coercive}  if
 \begin{align}
 \lim_{\|x\|\to\infty}f(x)=+\infty.
 \end{align}

\subsection{Kurdyka-{\L}ojasiewicz inequality}\label{s1}

The Kurdyka-{\L}ojasiewicz (K-L) inequality was first introduced by
{\L}ojasiewicz \cite{loj} for  real analytic functions, and then was
extended by Kurdyka \cite{kur} to smooth functions whose graph
belongs to an o-minimal structure. Recently, this notion was further
extended for  nonsmooth subanalytic functions \cite{bdo}.

\begin{definition}[K-L inequality]
A function $f:\mathrm{R}^n\to \mathrm{R}$ is said to satisfy the
 K-L inequality at $x_0$ if
 there exists $\eta>0, \delta>0, \varphi\in\mathscr{A}_{\eta}$, such that for all
$x\in\mathcal{O}(x_0,\delta)\cap\{x: f(x_0)<f(x)<f(x_0)+\eta\}$
\begin{align*}
 \varphi'(f(x)-f(x_0))\mathrm{dist}(0, \partial f(x))\ge1,
\end{align*}
where $\mathrm{dist}(x_0, \partial f(x)):=\inf\{\|x_0-y\|:y\in
\partial f(x)\},$ and
 $\mathscr{A}_{\eta}$ stand for the class of functions $\varphi:[0,\eta)\to
\mathrm{R}^{+}$ with the properties: (a) $\varphi$ is  continuous on
$[0,\eta)$; (b) $\varphi$ is smooth concave on $(0,\eta)$; (c)
 $\varphi(0)=0, \varphi'(x)>0, \forall x\in (0,\eta)$.
\end{definition}

The following is an extension of the conventional K-L inequality
\cite{bst}.

\begin{lemma}[K-L inequality on compact subsets]\label{KL}
Let $f:\mathrm{R}^n\to \mathrm{R}$ be a proper lower semi-continuous
function and let $\Omega\subseteq \mathrm{R}^n$ be a compact set. If
$f$ is a constant on $\Omega$ and $f$ satisfies the K-L inequality
at each point in $\Omega$, then
 there exists $\eta>0, \delta>0, \varphi\in\mathscr{A}_{\eta}$, such that  for all $x_0\in\Omega$ and for all
$x\in\{x\in\mathrm{R}^n:\mathrm{dist}(x,\Omega)<\delta)\}\cap\{x\in\mathrm{R}^n:
f(x_0)<f(x)<f(x_0)+\eta\}$,
\begin{align*}
 \varphi'(f(x)-f(x_0))\mathrm{dist}(0, \partial f(x))\ge1.
\end{align*}
\end{lemma}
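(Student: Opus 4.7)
The plan is the standard Bolte--Sabach--Teboulle ``uniformization'' argument: cover $\Omega$ with the domains of the pointwise K-L inequalities, extract a finite subcover, and glue the desingularizing functions by summation. The constancy of $f$ on $\Omega$ is what makes the construction work, because it turns the sublevel window $\{f(x_0)<f<f(x_0)+\eta\}$ into the same set for every $x_0\in\Omega$, which is precisely what allows separate local K-L inequalities to be stitched together.

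Concretely, at each $u\in\Omega$ the hypothesis supplies $\eta_u>0$, $\delta_u>0$, and $\varphi_u\in\mathscr{A}_{\eta_u}$ for which the K-L inequality holds on $\mathcal{O}(u,\delta_u)\cap\{f(u)<f<f(u)+\eta_u\}$. The shrunken balls $\{\mathcal{O}(u,\delta_u/2)\}_{u\in\Omega}$ form an open cover of the compact set $\Omega$, so I extract a finite subcover $\{\mathcal{O}(u_i,\delta_{u_i}/2)\}_{i=1}^{p}$ and set
$$\delta:=\min_{1\leq i\leq p}\tfrac{1}{2}\delta_{u_i},\qquad \eta:=\min_{1\leq i\leq p}\eta_{u_i},\qquad \varphi(s):=\sum_{i=1}^{p}\varphi_{u_i}(s),\ s\in[0,\eta).$$
Continuity on $[0,\eta)$, smooth concavity on $(0,\eta)$, the normalization $\varphi(0)=0$, and the positivity $\varphi'(s)=\sum_i\varphi_{u_i}'(s)>0$ are each preserved by finite summation, so $\varphi\in\mathscr{A}_\eta$.

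To verify the uniform inequality, I would fix $x_0\in\Omega$ and $x$ with $\mathrm{dist}(x,\Omega)<\delta$ and $f(x_0)<f(x)<f(x_0)+\eta$. Choose $u\in\Omega$ with $\|x-u\|<\delta$ and then an index $i$ with $u\in\mathcal{O}(u_i,\delta_{u_i}/2)$; the triangle inequality yields $\|x-u_i\|<\delta+\delta_{u_i}/2\leq\delta_{u_i}$. Since $f$ is constant on $\Omega$, $f(u_i)=f(x_0)$, and $\eta\leq\eta_{u_i}$, so the window condition $f(u_i)<f(x)<f(u_i)+\eta_{u_i}$ holds. The local K-L inequality at $u_i$ then fires and gives $\varphi_{u_i}'(f(x)-f(x_0))\,\mathrm{dist}(0,\partial f(x))\geq 1$. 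Dropping the remaining nonnegative summands of $\varphi'$ yields the desired inequality for the aggregated $\varphi$.

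The only place that requires care is the factor-of-two shrinkage in the cover: it is in place precisely so that any point at distance less than $\delta$ from \emph{some} element of $\Omega$ sits inside one of the finitely many full-radius balls $\mathcal{O}(u_i,\delta_{u_i})$, regardless of which element of $\Omega$ it is close to. Once that geometric bookkeeping is right, no delicate analytic work is needed, because concavity, smoothness, and strict positivity of the derivative all pass freely through finite sums.
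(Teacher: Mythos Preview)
The paper does not actually supply a proof of this lemma; it is quoted as a known result with a citation to Bolte--Sabach--Teboulle \cite{bst}. Your argument is precisely the standard uniformization proof from that reference (finite subcover with half-radius shrinkage, sum of desingularizing functions), and it is correct as written.
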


Typical functions satisfying the K-L inequality include strongly
convex functions, real analytic functions, semi-algebraic functions
and subanalytic functions.

A subset $C\subset \mathrm{R}^n$ is said to be {\em semi-algebraic}
if it can be written as
  \begin{align*}
   C=\bigcup_{j=1}^r\bigcap_{i=1}^{s}\{x\in \mathrm{R}^n: g_{i,j}(x)=0, h_{i,j}(x)<0\},
  \end{align*}
  where $g_{i,j},h_{i,j}:\mathrm{R}^n\to\mathrm{R}$ are real polynomial functions.
Then a function $f:\mathrm{R}^n\to \mathrm{R}$ is called {\em
semi-algebraic}  if its graph
  \begin{align*}
  \mathcal{G}(f):=\{(x,y)\in\mathrm{R}^{n+1}:f(x)=y\}
  \end{align*}
  is a semi-algebraic subset in $\mathrm{R}^{n+1}$. For example, the $\ell_{q}$ norm
  $\|x\|_{q}:=\sum_i|x_i|^{q}$ with $0<q\le1$, the sup-norm $\|x\|_{\infty}:=\max_i|x_i|,$
  the Euclidean norm $\|x\|$, $\|Ax-b\|^{q}_{q}$,
  $\|Ax-b\|$ and $\|Ax-b\|_\infty$ are all semi-algebraic functions for any matrix $A$ \cite{bst,yin}.

A real function on  $\mathrm{R}$ is said to be {\em analytic} if it
possesses derivatives of all orders
 and agrees with its Taylor series in a neighborhood of every point. For a real function $f$  on
 $\mathrm{R}^n$, it is said to be {\em analytic} if the function of one variable
 $g(t):=f(x+ty)$ is analytic for any $x,y\in\mathrm{R}^n$.
It is readily seen that  real polynomial functions such as quadratic
functions
  $\|Ax-b\|^2$  are analytic. Moreover, the $\varepsilon$-smoothed $\ell_{q}$ norm
  $\|x\|_{\varepsilon,q}:=\sum_i(x_i^2+\varepsilon)^{q/2}$ with $0<q\le1$ and the logistic
  loss function $\log(1+e^{-t})$ are all examples for real analytic functions \cite{yin}.

A subset $C\subset \mathrm{R}^n$ is said to be {\em subanalytic} if
it can be written as
  \begin{align*}
   C=\bigcup_{j=1}^r\bigcap_{i=1}^{s}\{x\in \mathrm{R}^n: g_{i,j}(x)=0, h_{i,j}(x)<0\},
  \end{align*}
  where $g_{i,j},h_{i,j}:\mathrm{R}^n\to\mathrm{R}$ are real analytic functions.
Then a function $f:\mathrm{R}^n\to \mathrm{R}$ is called {\em
subanalytic} if its graph $ \mathcal{G}(f)$ is a subanalytic subset
in $\mathrm{R}^{n+1}$. It is clear that both real analytic and
semi-algebraic functions are subanalytic. Generally speaking, the
sum of two subanalytic functions is not necessarily subanalytic. It
is known, however, that for two subanalytic functions, if at least
one function maps bounded sets to bounded sets, then their sum is
also subanalytic, as shown in \cite{bdo,yin}. In particular, the sum
of a subanalytic function and a analytic function is subanalytic.
Some subanalytic functions that are widely used are as follows:
\begin{itemize}
\item[(i)]  $\|Ax-b\|^2+\lambda\|y\|^q_q $;
\item[(ii)]  $\|Ax-b\|^2+\lambda\sum_i(y^2_i+\varepsilon)^{q/2}$;
  \item[(iii)]
  $\frac1n\sum_{i=1}^n\log(1+\exp(-c_i(a_i^{\top}x+b))+\lambda\|y\|^q_q$;
\item[(iv)]
  $\frac1n\sum_{i=1}^n\log(1+\exp(-c_i(a_i^{\top}x+b))+\lambda\sum_i(y^2_i+\varepsilon)^{q/2}$.
\end{itemize}

\subsection{Bregman distance}
The  Bregman distance, first introduced  in 1967 \cite{berg}, plays
an important role in various iterative algorithms. As a
generalization of squared Euclidean distance, the Bregman distance
share many similar nice properties of the Euclidean distance.
However, the Bregman distance is  not a real metric, since it does
not satisfy the
 triangle inequality nor symmetry.
 For a convex differential function $\phi$, the associated Bregman distance is defined as
\begin{align*}
\triangle_{\phi}(x,y)=\phi(x)-\phi(y)-\langle\nabla\phi(y),x-y\rangle.
\end{align*}
In particular, if we let $\phi(x):=\|x\|^2$ in the above, then it is
reduced to $\|x-y\|^2$, namely, the classical Euclidean distance.
Some nontrivial examples of Bregman distance include \cite{Ban}:
\begin{itemize}
\item[(i)]  Itakura-Saito distance:  $\sum_ix_i(\log
  x_i/y_i)- \sum_i(x_i-y_i)$;
  \item[(ii)]  Kullback-Leibler divergence: $\sum_ix_i(\log
  x_i/y_i)$;
\item[(iii)]  Mahalanobis distance: $\|x-y\|^2_Q=\langle Qx, x\rangle$
  with $Q$ a symmetric positive definite matrix.
\end{itemize}

The following proposition collects some useful properties of Bregman
distance.

 \begin{proposition}\label{pro}
Let $\phi$ be a convex differential function and
$\triangle_{\phi}(x,y)$ the associated  Bregman distance.
\begin{itemize}
  \item[(i)]  Non-negativity: $\triangle_{\phi}(x,y)\ge0,\triangle_{\phi}(x,x)=0$ for all $x,y$.
  \item[(ii)]  Convexity: $\triangle_{\phi}(x,y)$ is convex in $x$, but not necessarily in $y$.
  \item[(iii)]  Strong Convexity: If $\phi$ is $\delta$-strongly convex, then $\triangle_{\phi}(x,y)
  \ge\frac{\delta}{2}\|x-y\|^2$ for all $x,y$.
\end{itemize}
 \end{proposition}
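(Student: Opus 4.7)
The plan is to verify each of the three assertions directly from the definition
$$\triangle_{\phi}(x,y)=\phi(x)-\phi(y)-\langle\nabla\phi(y),x-y\rangle,$$
using only the convexity of $\phi$ and its differentiability. No sophisticated machinery is needed; each item reduces to a first-order characterization of convexity.

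For (i), substituting $y=x$ gives $\triangle_{\phi}(x,x)=\phi(x)-\phi(x)-\langle\nabla\phi(x),0\rangle=0$. For the inequality $\triangle_{\phi}(x,y)\ge0$, I would invoke the standard first-order characterization of convex differentiable functions: since $\phi$ is convex and differentiable, $\phi(x)\ge\phi(y)+\langle\nabla\phi(y),x-y\rangle$ for all $x,y$, and this is exactly the statement that $\triangle_{\phi}(x,y)\ge0$.

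For (ii), fix $y$ and view $\triangle_{\phi}(\,\cdot\,,y)$ as a function of $x$: it is the sum of the convex function $\phi(\cdot)$ and the affine map $x\mapsto -\phi(y)-\langle\nabla\phi(y),x-y\rangle$, hence convex in $x$. To show that convexity in $y$ need not hold, I would produce a simple scalar counterexample such as $\phi(t)=t^{4}$ on $\mathrm{R}$: direct computation of $\partial^{2}/\partial y^{2}$ of $\triangle_{\phi}(x,y)=x^{4}-y^{4}-4y^{3}(x-y)$ gives $-12y(x-y)$, which changes sign, so convexity in $y$ fails for suitable $x,y$.

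For (iii), the hypothesis that $\phi$ is $\delta$-strongly convex, combined with differentiability of $\phi$, lets me specialize the subdifferential inequality \eqref{B1} with the unique subgradient $\xi(y)=\nabla\phi(y)$ to obtain
$$\phi(x)\ge\phi(y)+\langle\nabla\phi(y),x-y\rangle+\tfrac{\delta}{2}\|x-y\|^{2}.$$
Subtracting $\phi(y)+\langle\nabla\phi(y),x-y\rangle$ from both sides yields precisely $\triangle_{\phi}(x,y)\ge\frac{\delta}{2}\|x-y\|^{2}$.

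No step presents a genuine obstacle: the whole proposition is essentially a repackaging of the first-order characterizations of convexity and strong convexity applied to the definition of the Bregman distance. The only place requiring any thought at all is exhibiting a clean counterexample for non-convexity in $y$ in part (ii), and standard examples such as $\phi(t)=t^{4}$ suffice.
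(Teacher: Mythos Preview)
Your argument is correct and entirely standard. Note, however, that the paper does not actually supply a proof of this proposition: it is stated as a collection of well-known facts about Bregman distances and used later without justification. Your write-up therefore fills in details the authors omitted rather than paralleling an existing proof.

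One small computational slip: for $\phi(t)=t^{4}$ the second $y$-derivative of $\triangle_{\phi}(x,y)=x^{4}+3y^{4}-4xy^{3}$ is $12y(3y-2x)$, not $-12y(x-y)$ (the latter is essentially $g'(y)/y$). The conclusion is unaffected, since $12y(3y-2x)$ also changes sign (e.g.\ negative at $y=1,\,x=2$), so the counterexample remains valid.
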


\subsection{Basic assumptions}
In the research of present paper, we will make the following
assumptions:
\begin{assumption}\label{j1}
We assume that functions $f,g,h,C,\phi, \psi, \varphi$ in problem
\eqref{p1} have the following properties:
\begin{itemize}
  \item[(a1)] $\langle CC^{\intercal}x, x\rangle=\|x\|^2_{C^{\intercal}}\ge
  \sigma_C\|x\|^2, \forall x\in\mathrm{R}^m$, namely, $C$ is full row rank;
  \item[(a2)] $\nabla h, \nabla \phi, \nabla\psi,  \nabla\varphi$ are
  Lipshitz continuous;
  \item[(a3)] either $f$ or $\phi$,
  either $g$ or $\psi$, and either $h$ or $\varphi$  are strongly convex;
  \item[(a4)] $f+g+h$ is subanalytic,
\end{itemize}
where $\sigma_C$ and $\ell_h$ are both positive real numbers.
\end{assumption}

In implementation of BADMM \eqref{B7}, the parameter $\alpha$, and
the smooth convex functions $\phi$, $\psi,$ and $\varphi$ should be
regularized. We further assume $\textbf{Assumption 2}$:
\begin{align}\label{C4}
\alpha>
  \frac{4[(\ell_{h}+\ell_{\varphi})^2+\ell_{\varphi}^2]}{\mu_3\sigma_C},
\end{align}
where $\mu_3$ is the strong convexity coefficient of $h$ or
$\varphi$, and $\ell_h$ and $\ell_{\varphi}$ are respectively the
Lipschitz coefficient of $\nabla h$ and $\nabla \varphi$.

We remark that conditions (a1)-(a2) above are standard assumptions
even for convex settings. Condition (a3) is used to guarantee the
sufficient descent property of iterates, and condition (a4) is a
basic assumption assuring that the function $\hat{L}$, to be defined
in the next section, can satisfy the K-L inequality, which in turn
will imply the global convergence of the proposed algorithm.

The smooth convex functions in the Bregman distance  are very easily
specified; for example, take $\phi(\cdot)=\psi(\cdot)=
\varphi(\cdot)=\frac{1}{2}\|\cdot\|^2.$ Note that if $\phi$ is
$\mu_1$-strongly convex, then its Bregman distance satisfies
\begin{align}\label{B8}
\triangle_\phi(x,y)\ge\frac{\mu_1}{2}\|x-y\|^2,
\end{align}
 which follows from Proposition \ref{pro}.

\section{Convergence Analysis}
In this section, under the Assumptions 1 and 2 we firstly give a convergence result for
the BADMM  with 3-block procedure~\eqref{B7}, and then extend this result to the $N$-block ($N \geq 3$) case. The main results are presented in the subsection~\ref{mainRes}.

For convenience, we first fix the following notations:
\begin{align*}
&\sigma_0=\frac{2\ell_{\varphi}^2}{\alpha\sigma_C}, \
\sigma_1=\frac12\min \left(\mu_1, \mu_2,
\mu_3-\frac{4(\ell_{h}+\ell_{\varphi})^2}{\alpha\sigma_C}-
  \frac{4\ell_{\varphi}^2}{\alpha\sigma_C}\right),\\
&u=(x,y,z), w=(x,y,z,p), \hat{w}=(x,y,z,p,\hat{z}),\\
&u^k=(x^{k},y^{k},z^{k}), w^k=(x^{k},y^{k},z^{k},p^k), \hat{w}^k=(x^{k},y^{k},z^{k},p^{k},z^{k-1}),\\
&\|w\|=(\|x\|^2+\|y\|^2+\|z\|^2)^{1/2}, \|w\|_1=\|x\|+\|y\|+\|z\|,
\end{align*}
where  $\mu_1$   is the strong convexity coefficient of $f$ or
$\phi$, and $\mu_2$   is the strong convexity coefficient of $g$ or
$\psi$.
 Clearly both $\sigma_0$ and $\sigma_1$ are positive by our assumptions.
 Also, we define a new function $\hat{L}:\mathrm{R}^{n_1}\times\mathrm{R}^{n_2}
 \times\mathrm{R}^{n_3}\times\mathrm{R}^{m}\times\mathrm{R}^{n_3}\to  \mathrm{R}$ by
\begin{align}\label{eq1}
\hat{L}(\hat{w})=L_\alpha(w)+\sigma_0\|z-\hat{z}\|^2.
\end{align}

\subsection{Some lemmas}
We establish a series of lemmas to support the proof of convergence
of BADMM  with 3-block procedure~\eqref{B7}.
\begin{lemma}\label{Le1}
 For each $k\in\mathrm{N}$
\begin{align}\label{D1}
\|p^{k+1}-p^{k}\|^2&\le
\frac{2(\ell_{h}+\ell_{\varphi})^2}{\sigma_C} \|z^{k+1}-z^{k}\|^2 +
\frac{2\ell_{\varphi}^2}{\sigma_C}\|z^{k}-z^{k-1}\|^2.
\end{align}
\end{lemma}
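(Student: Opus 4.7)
The plan is to extract an explicit formula for $C^{\top}p^{k+1}$ from the first-order optimality condition of the $z$-subproblem and then bound dual increments by primal increments using the full-row-rank condition (a1).

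First, since $h$ is smooth and $\varphi$ is a smooth convex function, I would differentiate the $z$-subproblem in \eqref{B7}. The Fermat rule (Proposition on subdifferentials, item (iii)) combined with $\partial L_\alpha$ reducing to a gradient in $z$ gives
\begin{align*}
0 = \nabla h(z^{k+1}) + C^{\top}p^{k} + \alpha C^{\top}(Ax^{k+1}+By^{k+1}+Cz^{k+1}) + \nabla\varphi(z^{k+1}) - \nabla\varphi(z^{k}).
\end{align*}
Recognizing the $p$-update $p^{k+1}=p^{k}+\alpha(Ax^{k+1}+By^{k+1}+Cz^{k+1})$ in the middle two terms, this collapses to
\begin{align*}
C^{\top}p^{k+1} = -\nabla h(z^{k+1}) - \nabla\varphi(z^{k+1}) + \nabla\varphi(z^{k}).
\end{align*}
Shifting indices yields the analogous identity at step $k$, namely $C^{\top}p^{k} = -\nabla h(z^{k}) - \nabla\varphi(z^{k}) + \nabla\varphi(z^{k-1})$.

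Subtracting the two identities, I obtain
\begin{align*}
C^{\top}(p^{k+1}-p^{k}) = -\bigl[\nabla h(z^{k+1})-\nabla h(z^{k})\bigr] - \bigl[\nabla\varphi(z^{k+1})-\nabla\varphi(z^{k})\bigr] + \bigl[\nabla\varphi(z^{k})-\nabla\varphi(z^{k-1})\bigr].
\end{align*}
Grouping the first two brackets together (they share the increment $z^{k+1}-z^{k}$) and the last bracket alone, the triangle inequality together with the Lipschitz continuity of $\nabla h$ and $\nabla\varphi$ from (a2) yields
\begin{align*}
\|C^{\top}(p^{k+1}-p^{k})\| \le (\ell_{h}+\ell_{\varphi})\|z^{k+1}-z^{k}\| + \ell_{\varphi}\|z^{k}-z^{k-1}\|.
\end{align*}
Squaring and applying $(a+b)^{2}\le 2a^{2}+2b^{2}$ gives
\begin{align*}
\|C^{\top}(p^{k+1}-p^{k})\|^{2} \le 2(\ell_{h}+\ell_{\varphi})^{2}\|z^{k+1}-z^{k}\|^{2} + 2\ell_{\varphi}^{2}\|z^{k}-z^{k-1}\|^{2}.
\end{align*}

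Finally, assumption (a1) says $\langle CC^{\top}q,q\rangle\ge\sigma_{C}\|q\|^{2}$ for all $q\in\mathrm{R}^{m}$, i.e.\ $\|C^{\top}q\|^{2}\ge\sigma_{C}\|q\|^{2}$. Applying this with $q=p^{k+1}-p^{k}$ and dividing the previous display by $\sigma_{C}$ produces exactly \eqref{D1}. There is no real obstacle here beyond the bookkeeping of signs; the only place where care is needed is ensuring the smoothness assumption (a2) is invoked so that the $z$-subproblem's optimality condition is a genuine gradient equation, and that the dual update is used to eliminate the augmented-Lagrangian cross term before taking Lipschitz bounds.
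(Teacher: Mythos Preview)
Your proof is correct and follows essentially the same approach as the paper: derive the optimality condition of the $z$-subproblem, absorb the dual update to get $C^{\top}p^{k+1}=-\nabla h(z^{k+1})-\nabla\varphi(z^{k+1})+\nabla\varphi(z^{k})$, difference consecutive iterates, apply the Lipschitz bounds and $(a+b)^2\le 2a^2+2b^2$, and finish with the full-row-rank inequality $\|C^{\top}q\|^2\ge\sigma_C\|q\|^2$. The only cosmetic difference is that the paper states the lower bound $\|C^{\top}(p^{k+1}-p^k)\|^2\ge\sigma_C\|p^{k+1}-p^k\|^2$ up front rather than at the end.
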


\begin{proof}
By our assumptions on $C$, we have
\begin{align}\label{D5}
\|C^{\intercal}(p^{k+1}-p^{k})\|^2 =\langle
CC^{\intercal}(p^{k+1}-p^{k}), p^{k+1}-p^{k}\rangle\ge\sigma_C
\|p^{k+1}-p^{k}\|^2.
\end{align}
Applying  Fermat's rule to $z$-subproblem in \eqref{B7}, we then get
\begin{align*}
 \nabla h(z^{k+1})+ C^{\intercal}(p^k+\alpha (Ax^{k+1}+By^{k+1}+Cz^{k+1}))
 +\nabla\varphi(z^{k+1})-\nabla\varphi(z^k)=0.
\end{align*}
Note that $p^{k+1}=p^k+\alpha (Az^{k+1}+By^{k+1}+Cz^{k+1})$. It then
follows that
\begin{align}\label{D2}
 \nabla h(z^{k+1})+C^{\intercal}p^{k+1}
 +\nabla\varphi(z^{k+1})-\nabla\varphi(z^k)=0,
\end{align}
so that
\begin{align*}
&\quad \ \|C^{\intercal}(p^{k+1}-p^{k})\|^2\\
&=\|\nabla h(z^{k+1})-\nabla h(z^{k}) +(\nabla\varphi(z^{k+1})-\nabla\varphi(z^k))+(\nabla\varphi(z^{k-1})-\nabla\varphi(z^k))\|^2\\
&\le\big(\|\nabla h(z^{k+1})-\nabla h(z^{k})\| +\|\nabla\varphi(z^{k+1})-\nabla\varphi(z^k)\|+\|\nabla\varphi(z^{k-1})-\nabla\varphi(z^k)\|\big)^2\\
&\le\big(\ell_{h}\|z^{k+1}-z^{k}\|+\ell_{\varphi}\|z^{k}-z^{k+1}\|+\ell_{\varphi}\|z^{k}-z^{k-1}\|\big)^2\\
&\le2(\ell_{h}+\ell_{\varphi})^2\|z^{k+1}-z^{k}\|^2+2\ell_{\varphi}^2\|z^{k}-z^{k-1}\|^2.
\end{align*}
This together with \eqref{D5} at once yields inequality \eqref{D1}.
\end{proof}

\begin{lemma}\label{Le2}
For each $k\in\mathrm{N}$
\begin{align}\label{F1}
\begin{split}
L_{\alpha}(w^{k+1}) &\le
L_{\alpha}(w^{k})+\left(\frac{2(\ell_{h}+\ell_{\varphi})^2}{\alpha\sigma_C}-\frac{\mu_3}{2}\right)
\|z^{k+1}-z^{k}\|^2\\
&\quad + \frac{2\ell_{\varphi}^2}{\alpha\sigma_C}\|z^{k}-z^{k-1}\|^2
-\frac{\mu_1}{2}\|x^{k+1}-x^k\|^2-\frac{\mu_2}{2}\|y^{k+1}-y^k\|^2.
\end{split}
\end{align}
\end{lemma}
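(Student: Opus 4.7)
The plan is to write $L_\alpha(w^{k+1})-L_\alpha(w^k)$ as a telescoping sum over the four block updates and bound each piece separately:
\begin{align*}
L_\alpha(w^{k+1})-L_\alpha(w^k) &= \bigl[L_\alpha(x^{k+1},y^{k+1},z^{k+1},p^{k+1})-L_\alpha(x^{k+1},y^{k+1},z^{k+1},p^{k})\bigr]\\
&\quad +\bigl[L_\alpha(x^{k+1},y^{k+1},z^{k+1},p^{k})-L_\alpha(x^{k+1},y^{k+1},z^{k},p^{k})\bigr]\\
&\quad +\bigl[L_\alpha(x^{k+1},y^{k+1},z^{k},p^{k})-L_\alpha(x^{k+1},y^{k},z^{k},p^{k})\bigr]\\
&\quad +\bigl[L_\alpha(x^{k+1},y^{k},z^{k},p^{k})-L_\alpha(x^{k},y^{k},z^{k},p^{k})\bigr].
\end{align*}

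First I would dispatch the dual piece. Directly from the definition of $L_\alpha$, the $p$-difference equals $\langle p^{k+1}-p^{k},\,Ax^{k+1}+By^{k+1}+Cz^{k+1}\rangle$, which by the multiplier rule in \eqref{B7} is exactly $\tfrac{1}{\alpha}\|p^{k+1}-p^{k}\|^2$. Plugging in Lemma~\ref{Le1} converts this into $\tfrac{2(\ell_{h}+\ell_{\varphi})^2}{\alpha\sigma_C}\|z^{k+1}-z^{k}\|^2+\tfrac{2\ell_{\varphi}^2}{\alpha\sigma_C}\|z^{k}-z^{k-1}\|^2$, which supplies the only two positive contributions in the final bound.

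Next I would process each of the three primal subproblems by the same recipe. Take the $z$-step: $z^{k+1}$ minimizes $F_z(z):=L_\alpha(x^{k+1},y^{k+1},z,p^{k})+\triangle_{\varphi}(z,z^{k})$. By Assumption~\ref{j1}(a3), either $h$ or $\varphi$ is $\mu_3$-strongly convex; together with convexity of the augmented penalty in $z$ and of the Bregman term (Proposition~\ref{pro}), this makes $F_z$ strongly convex in $z$ with modulus $\mu_3$. The standard consequence that $F_z(z^{k})\ge F_z(z^{k+1})+\tfrac{\mu_3}{2}\|z^{k+1}-z^{k}\|^2$, combined with $\triangle_{\varphi}(z^{k},z^{k})=0$ and $\triangle_{\varphi}(z^{k+1},z^{k})\ge0$, yields the desired decrease $-\tfrac{\mu_3}{2}\|z^{k+1}-z^{k}\|^2$. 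Running the identical argument on the $y$- and $x$-subproblems (replacing $\mu_3,\varphi$ by $\mu_2,\psi$ and then $\mu_1,\phi$) produces the $-\tfrac{\mu_2}{2}\|y^{k+1}-y^{k}\|^2$ and $-\tfrac{\mu_1}{2}\|x^{k+1}-x^{k}\|^2$ terms.

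Adding the four bounds gives the stated inequality immediately. The only slightly delicate verification is that $\mu_3$-strong convexity of $F_z$ is really available in both branches of (a3): if the strong convexity comes from $h$, the quadratic penalty is merely convex (the columns of $C$ need not have full rank) but that is harmless, and $\triangle_{\varphi}(\cdot,z^{k})$ is convex by Proposition~\ref{pro}(ii), so $F_z$ inherits the constant $\mu_3$ from $h$; if it comes from $\varphi$, Proposition~\ref{pro}(iii) supplies $\triangle_{\varphi}(z^{k+1},z^{k})\ge\tfrac{\mu_3}{2}\|z^{k+1}-z^{k}\|^2$ so the same decrease is extracted directly from the Bregman term without appealing to strong convexity of $F_z$ as a whole. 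Beyond this dichotomy there is no real obstacle — Lemma~\ref{Le1} has already absorbed the main nonconvex difficulty, namely the coupling between the dual ascent and the $z$-increments.
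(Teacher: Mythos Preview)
Your proposal is correct and follows essentially the same route as the paper: telescope over the four block updates, compute the dual increment as $\tfrac{1}{\alpha}\|p^{k+1}-p^{k}\|^{2}$ and bound it via Lemma~\ref{Le1}, and for each primal block obtain the $-\tfrac{\mu_i}{2}\|\cdot\|^2$ decrease by using either strong convexity of the objective piece or the Bregman lower bound from Proposition~\ref{pro}(iii), according to which branch of (a3) holds. The only quibble is your interim claim that $F_z$ is $\mu_3$-strongly convex in both branches --- this need not hold when $h$ is nonconvex and the strong convexity comes from $\varphi$ --- but your final paragraph handles that dichotomy correctly (via the minimizer inequality plus $\triangle_\varphi(z^{k+1},z^k)\ge\tfrac{\mu_3}{2}\|z^{k+1}-z^k\|^2$), which is exactly the paper's argument.
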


\begin{proof}
First we show that if either $f$ or $\phi$ is  strongly convex, then
it follows that
\begin{align}\label{eq2}
L_{\alpha}(x^{k+1},y^{k},z^{k},p^{k})&\le
L_{\alpha}(x^{k},y^{k},z^{k},p^{k})-\frac{1}{2\mu_1}\|x^{k+1}-x^k\|^2.
\end{align}
In fact, if $f$ is strongly convex, then
$L_\alpha(x,y^k,z^k,p^k)+\triangle_\phi(x,x^k)$ is  strongly convex
with modulus $\mu_1$, and thus inequality \eqref{eq2} follows from
\eqref{B1}. Let us now justify the case whenever $\phi$ is strongly
convex. As $y^{k+1}$ is a minimizer of
$L_\alpha(x,y^k,z^k,p^k)+\triangle_\phi(x,x^k)$, we have
\begin{align*}
L_\alpha(x^{k+1},y^k,z^k,p^k) &\le L_\alpha(x,y^k,z^k,p^k)-\triangle_\phi(x^{k+1},x^k) \\
   &\le L_\alpha(x,y^k,z^k,p^k)-\frac{1}{2\mu_1}\|x^{k+1}-x^k\|^2,
\end{align*}
where the last inequality follows from \eqref{B8}. Similarly, we
have
\begin{align*}
 L_{\alpha}(x^{k+1},y^{k+1},z^{k},p^{k})&\le L_{\alpha}(x^{k+1},y^{k},z^{k},p^{k})-\frac{1}{2\mu_2}\|y^{k+1}-y^k\|^2\\
 L_{\alpha}(x^{k+1},y^{k+1},z^{k+1},p^{k})&\le L_{\alpha}(x^{k+1},y^{k+1},z^{k},p^{k})-\frac{1}{2\mu_3}\|z^{k+1}-z^k\|^2,
\end{align*}
and from the last equality in \eqref{B7} we have
\begin{align*}
 L_{\alpha}(x^{k+1},y^{k+1},z^{k+1},p^{k+1})&= L_{\alpha}(x^{k+1},y^{k+1},z^{k+1},p^{k})+\frac1\alpha\|p^{k+1}-p^{k}\|^2.
\end{align*}
Adding up the above formulas, we get
\begin{align}\label{B3}
\begin{split}
 L_{\alpha}(w^{k+1})
&\le L_{\alpha}(w^{k})+\frac1\alpha\|p^{k+1}-p^{k}\|^2 -\frac{1}{2\mu_1}\|x^{k+1}-x^k\|^2\\
&\quad-\frac{1}{2\mu_2}\|y^{k+1}-y^k\|^2-\frac{1}{2\mu_3}\|z^{k+1}-z^k\|^2.
\end{split}
\end{align}
This together with \eqref{D1} yields inequality \eqref{F1} as
desired.
\end{proof}


\begin{lemma}\label{Le3}
For each $k\in\mathrm{N}$
\begin{align*}
\hat{L}(\hat{w}^{k+1})\le\hat{L}(\hat{w}^{k})
-\sigma_1(\|x^{k+1}-x^k\|^2+\|y^{k+1}-y^k\|^2+\|z^{k}-z^{k+1}\|^2).
\end{align*}
\end{lemma}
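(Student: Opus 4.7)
The plan is to read off the inequality as a direct algebraic consequence of Lemma~\ref{Le2} combined with the definition of $\hat{L}$, exploiting the way the auxiliary quadratic $\sigma_0\|z-\hat z\|^2$ was designed to ``telescope'' the lagging term $\|z^{k}-z^{k-1}\|^2$ that appears in Lemma~\ref{Le2}.

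First, I would unpack the definition. Since $\hat{w}^k=(x^k,y^k,z^k,p^k,z^{k-1})$ and $\hat{w}^{k+1}=(x^{k+1},y^{k+1},z^{k+1},p^{k+1},z^{k})$, we have
\begin{align*}
\hat L(\hat w^{k+1})-\hat L(\hat w^{k})
= \bigl(L_\alpha(w^{k+1})-L_\alpha(w^{k})\bigr)
+\sigma_0\|z^{k+1}-z^{k}\|^{2}-\sigma_0\|z^{k}-z^{k-1}\|^{2}.
\end{align*}
Then I would plug in the estimate of Lemma~\ref{Le2}. The term $\frac{2\ell_\varphi^{2}}{\alpha\sigma_C}\|z^{k}-z^{k-1}\|^{2}$ on the right-hand side is, by the very choice $\sigma_0=\frac{2\ell_\varphi^{2}}{\alpha\sigma_C}$, exactly cancelled by $-\sigma_0\|z^{k}-z^{k-1}\|^{2}$. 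What survives on the $z$-side is the coefficient
\begin{align*}
\frac{2(\ell_h+\ell_\varphi)^{2}}{\alpha\sigma_C}-\frac{\mu_3}{2}+\sigma_0
=\frac{2(\ell_h+\ell_\varphi)^{2}+2\ell_\varphi^{2}}{\alpha\sigma_C}-\frac{\mu_3}{2}
\end{align*}
multiplying $\|z^{k+1}-z^{k}\|^{2}$, together with the descent terms $-\tfrac{\mu_1}{2}\|x^{k+1}-x^k\|^2$ and $-\tfrac{\mu_2}{2}\|y^{k+1}-y^k\|^2$ inherited from Lemma~\ref{Le2}.

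The remaining step is to check that each of these three coefficients is at most $-\sigma_1$. By Assumption~2, namely $\alpha>\frac{4[(\ell_h+\ell_\varphi)^{2}+\ell_\varphi^{2}]}{\mu_3\sigma_C}$, the quantity
\begin{align*}
\mu_3-\frac{4(\ell_h+\ell_\varphi)^{2}}{\alpha\sigma_C}-\frac{4\ell_\varphi^{2}}{\alpha\sigma_C}
\end{align*}
is strictly positive, so the definition of $\sigma_1$ as half the minimum of $\mu_1$, $\mu_2$ and this quantity is well-posed, and each of $\tfrac{\mu_1}{2},\tfrac{\mu_2}{2},\tfrac{\mu_3}{2}-\frac{2(\ell_h+\ell_\varphi)^{2}+2\ell_\varphi^{2}}{\alpha\sigma_C}$ dominates $\sigma_1$. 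Combining these three dominations delivers the asserted inequality.

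I do not expect any serious obstacle: the lemma is essentially a bookkeeping step, and Assumption~2 together with the constants $\sigma_0$, $\sigma_1$ were manifestly engineered to make the arithmetic close. The only thing to be careful about is writing the cancellation cleanly and matching every surviving coefficient against $-\sigma_1$ via the three cases defining the minimum.
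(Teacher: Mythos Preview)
Your proposal is correct and follows essentially the same approach as the paper: both proofs add $\sigma_0\|z^{k+1}-z^k\|^2-\sigma_0\|z^{k}-z^{k-1}\|^2$ to the inequality of Lemma~\ref{Le2}, use the choice $\sigma_0=\tfrac{2\ell_\varphi^2}{\alpha\sigma_C}$ to cancel the lagging term, and then bound the three surviving coefficients by $-\sigma_1$ via its definition.
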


\begin{proof}
It follows from lemmas \ref{Le1} and \ref{Le2} that
\begin{align*}
&L_{\alpha}(x^{k+1},y^{k+1},z^{k+1},p^{k+1})
  -L_{\alpha}(x^{k},y^{k},z^{k},p^{k})\\
&\le\left(\frac{2(\ell_{h}+\ell_{\varphi})^2}{\alpha\sigma_C}-\frac{\mu_3}{2}\right)
\|z^{k+1}-z^{k}\|^2 + \frac{2\ell_{\varphi}^2}{\alpha\sigma_C}\|z^{k}-z^{k-1}\|^2\\
&\quad
-\frac{\mu_1}{2}\|x^{k+1}-x^k\|^2-\frac{\mu_2}{2}\|y^{k+1}-y^k\|^2,
\end{align*}
which implies
\begin{align*}
&L_{\alpha}(x^{k+1},y^{k+1},z^{k+1},p^{k+1})+\sigma_0\|z^{k+1}-z^k\|^2\\
&\le L_{\alpha}(x^{k},y^{k},z^{k},p^{k})+\sigma_0\|z^{k}-z^{k-1}\|^2\\
&\quad-\left(\frac{\mu_3}{2}-\frac{2(\ell_{h}+\ell_{\varphi})^2}{\alpha\sigma_C}-
  \frac{2\ell_{\varphi}^2}{\alpha\sigma_C}\right)\|z^{k}-z^{k+1}\|^2\\
  &\quad -\frac{\mu_1}{2}\|x^{k+1}-x^k\|^2-\frac{\mu_2}{2}\|y^{k+1}-y^k\|^2\\
  &\le L_{\alpha}(x^{k},y^{k},z^{k},p^{k})+\sigma_0\|z^{k}-z^{k-1}\|^2\\
  &\quad-\sigma_1(\|x^{k+1}-x^k\|^2+\|y^{k+1}-y^k\|^2+\|z^{k}-z^{k+1}\|^2).
\end{align*}
Then lemma \ref{Le3} follows from our notations.
\end{proof}

\begin{lemma}\label{Le4}
If the sequence $\{u^k\}$ is bounded, then we have
$$\sum_{k=0}^{\infty}\|w^k-w^{k+1}\|^2<\infty.$$
In particular, the sequence $\|w^k-w^{k+1}\|$ is asymptotically
regular, namely, $\|w^k-w^{k+1}\|\to0$ as $k\to\infty.$ Moreover,
any cluster point of $w^k$ is a stationary point of the augmented
Lagrangian function $L_{\alpha}$ defined as in \eqref{lg}.
\end{lemma}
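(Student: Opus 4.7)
The plan is to extract three conclusions in order: square-summability of the increments $\|w^{k+1}-w^k\|^2$, asymptotic regularity (which is immediate from square-summability), and stationarity of every cluster point. The whole scheme rests on first showing $\{\hat w^k\}$ stays in a compact set on which $\hat L$ is bounded below, then telescoping Lemma~\ref{Le3}, and finally performing a standard limiting-subdifferential passage to the limit.

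\textbf{Step 1 (boundedness of $\{p^k\}$ and of $\hat L$).} Since $\{u^k\}$ is bounded by hypothesis and $\nabla h, \nabla\varphi$ are Lipschitz (assumption (a2)), the optimality identity \eqref{D2} combined with the full-row-rank bound $\|C^{\intercal}p\|^2\ge\sigma_C\|p\|^2$ from (a1) yields a uniform bound on $\|p^{k+1}\|$. Thus $\{\hat w^k\}$ lies in a compact set, on which the proper lower semicontinuous function $L_\alpha$ attains a minimum, so $\hat L$ is bounded below along $\{\hat w^k\}$.

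\textbf{Step 2 (square-summability).} Telescoping Lemma~\ref{Le3} from $k=0$ to $K$ gives
\begin{align*}
\sigma_1\sum_{k=0}^{K}\bigl(\|x^{k+1}-x^k\|^2+\|y^{k+1}-y^k\|^2+\|z^{k+1}-z^k\|^2\bigr)\le\hat L(\hat w^0)-\hat L(\hat w^{K+1}),
\end{align*}
and letting $K\to\infty$ together with Step~1 delivers square-summability of the $x$, $y$, $z$ increments. Inserting this into Lemma~\ref{Le1} then gives $\sum\|p^{k+1}-p^k\|^2<\infty$, and hence $\sum\|w^{k+1}-w^k\|^2<\infty$. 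Asymptotic regularity follows at once.

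\textbf{Step 3 (stationarity of cluster points).} Let $w^*$ be any cluster point along a subsequence $\{w^{k_j}\}$. From the last line of \eqref{B7}, $Ax^{k+1}+By^{k+1}+Cz^{k+1}=(p^{k+1}-p^k)/\alpha\to 0$, which yields $Ax^*+By^*+Cz^*=0$. Passing to the limit in \eqref{D2}, using Lipschitz continuity of $\nabla h$ and $\nabla\varphi$ together with $\|z^{k+1}-z^k\|\to 0$, yields $C^{\intercal}p^*=-\nabla h(z^*)$. For the $x$- and $y$-conditions, Fermat's rule applied to the first two subproblems of \eqref{B7} produces explicit elements $u^{k+1}\in\partial f(x^{k+1})$ and $v^{k+1}\in\partial g(y^{k+1})$ whose limits along the subsequence equal $-A^{\intercal}p^*$ and $-B^{\intercal}p^*$ respectively; this uses asymptotic regularity and the Lipschitz continuity of $\nabla\phi,\nabla\psi$ to kill the $\nabla\phi(x^{k+1})-\nabla\phi(x^k)$ and analogous $\psi$-residuals.

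\textbf{The main obstacle} will be invoking the closedness property (Proposition~(ii)) of the limiting subdifferential, which requires not just convergence of the subgradients $u^{k_j+1}$ but also $f(x^{k_j+1})\to f(x^*)$, and similarly for $g$. The standard remedy is to exploit the minimizing property of the iterates: comparing the value of the $x$-subproblem's objective at $x^{k+1}$ and at the test point $x^*$, and using the quadratic upper bound $\triangle_\phi(x^{k+1},x^k)\le\tfrac{\ell_\phi}{2}\|x^{k+1}-x^k\|^2\to 0$ together with continuity of the remaining inner-product and quadratic terms, yields $\limsup_j f(x^{k_j+1})\le f(x^*)$; combined with lower semicontinuity this forces $f(x^{k_j+1})\to f(x^*)$. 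An identical argument handles $g$, after which the closedness property closes out all four stationarity conditions.
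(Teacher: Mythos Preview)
Your proposal is correct and follows essentially the same route as the paper: bound $\{p^k\}$ via \eqref{D2} and the full-row-rank of $C$, telescope Lemma~\ref{Le3} to obtain square-summability of the $(x,y,z)$-increments (and then of $\|p^{k+1}-p^k\|^2$ via Lemma~\ref{Le1}), and pass to the limit in the optimality conditions of \eqref{B7}. Your Step~3 is in fact more careful than the paper's own argument, which simply lets $j\to\infty$ in the subdifferential inclusions without explicitly verifying $f(x^{k_j+1})\to f(x^*)$ and $g(y^{k_j+1})\to g(y^*)$; your use of the $x$- and $y$-subproblem minimizing property together with the Lipschitz upper bound on $\triangle_\phi,\triangle_\psi$ to force $\limsup f(x^{k_j+1})\le f(x^*)$ closes a gap that the paper leaves implicit.
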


\begin{proof}
We first show that the sequence $\{w^k\}$ is bounded. Indeed we
deduce from Eq. \eqref{D2} that
\begin{align*}
\|C^{\intercal}p^k\|^2&=\|\nabla h(z^k)+\nabla\varphi(z^k)-\nabla\varphi(z^{k-1})\|^2\\
&\le(\|\nabla h(z^k)\|+\ell_{\varphi}\|z^k-z^{k-1}\|)^2\\
&\le2(\|\nabla h(z^k)\|^2+\ell_{\varphi}^2\|z^k-z^{k-1}\|^2).
\end{align*}
Since $C$ has full row rank, we have
\begin{align}\label{B2}
\sigma_C\|p^k\|^2\le2(\|\nabla
h(z^k)\|^2+\ell_{\varphi}^2\|z^k-z^{k-1}\|^2).
\end{align}
Note that  $\{u^k\}$ is bounded. This implies that the sequence
$\{p^k\}$ is bounded and so are the sequences $\{w^k\}$ and
$\{\hat{w}^k\}$.

 Since $\hat{w}^k$ is
bounded, there exists a subsequence $\hat{w}^{k_j}$ so that it is
convergent to some element $\hat{w}^{*}$. By our hypothesis, the
function $\hat{L}$ is lower semicontinuous, which leads to
$$\liminf_{j\to\infty}\hat{L}(\hat{w}^{k_j})\ge \hat{L}(\hat{w}^*),$$
so that $\hat{L}(\hat{w}^{k_j})$ is bounded from below. By Lemma
\ref{Le3}, $\hat{L}(\hat{w}^{k})$ is nonincreasing, so that
$\hat{L}(\hat{w}^{k_j})$ is a convergent sequence. Moreover
$\hat{L}(\hat{w}^{k})$ is also convergent and
$\hat{L}(\hat{w}^{k})\ge
 \hat{L}(\hat{w}^{*})$ for each $k$.

Now fix $k\in\mathrm{N}.$ By Lemma \ref{Le3}, we have
\begin{align*}
& \sigma_1\sum_{i=1}^{k}(\|x^{k+1}-x^k\|^2+\|y^{k+1}-y^k\|^2+\|z^{k}-z^{k+1}\|^2)\\
&\le\sum_{i=1}^{k} \hat{L}(\hat{w}^i)-\hat{L}(\hat{w}^{i+1})
=\hat{L}(\hat{w}^{1})-\hat{L}(\hat{w}^{k+1})\\
&\le\hat{L}(\hat{w}^{1})-\hat{L}(\hat{w}^{*})<\infty.
\end{align*}
Moreover, by inequality \eqref{D1}, we see that
$\sum_{k=0}^{\infty}\|p^k-p^{k+1}\|^2<\infty.$ This implies
$\sum_{k=0}^{\infty}\|w^k-w^{k+1}\|^2<\infty$, and  hence
$\|w^k-w^{k+1}\|\to0$.

Let $w^*=(x^*,y^*,z^*,p^*)$ be any cluster point of $w^k$ and let
$w^{k_j}$ be a subsequence of $w^k$  converging to $w^*$.
 It then  follows from algorithm \eqref{B7}  that
\begin{align*}
p^{k+1}&=p^{k}+\alpha (Ax^{k+1}+By^{k+1}+Cz^{k+1}),\\
-\partial f(x^{k+1})&  \ni A^{\intercal}p^{k}+\alpha A^{\intercal}(Ax^{k+1}+By^k+Cz^k)+\nabla\phi(x^{k+1})-\nabla\phi(x^k)\\
  &  = A^{\intercal}p^{k+1}+\alpha A^{\intercal}B(y^k-y^{k+1})+\alpha A^{\intercal}C(z^k-z^{k+1})+\nabla\phi(x^{k+1})-\nabla\phi(x^k),\\
  -\partial g(y^{k+1})&  \ni B^{\intercal}p^{k}+\alpha B^{\intercal}(Ax^{k+1}+By^{k+1}+Cz^k)+\nabla\psi(y^{k+1})-\nabla\psi(y^k)\\
  &  = B^{\intercal}p^{k+1}+\alpha B^{\intercal}C(z^k-z^{k+1})+\nabla\psi(y^{k+1})-\nabla\psi(y^k),\\
-\nabla h(z^{k+1})& = C^{\intercal}p^{k}+\alpha C^{\intercal}(Ax^{k+1}+By^{k+1}+Cz^{k+1})+\nabla\varphi(z^{k})-\nabla\varphi(z^{k+1})\\
& =
C^{\intercal}p^{k+1}+\nabla\varphi(z^{k})-\nabla\varphi(z^{k+1}).
\end{align*}
 Since
$\|w^k-w^{k+1}\|$ tends to zero, letting $j\to\infty$ in the above
formulas yields
\begin{align*}
A^{\intercal}p^*\in-\partial f(x^*), \ B^{\intercal}p^*\in -\partial g(y^{*}), \\
C^{\intercal}p^*=-\nabla h(z^*), \ Ax^*+By^*+Cz^*=0,
\end{align*}
which implies that $w^*$ is a stationary point of $L_\alpha$.
\end{proof}

\begin{lemma}\label{Le5}
There exists $\kappa>0$ such that for each $k$
\begin{align*}
\mathrm{dist}(0,\partial
\hat{L}(\hat{w}^{k+1}))&\le\kappa(\|x^k-x^{k+1}\|+\|y^k-y^{k+1}\|+\|z^{k}-z^{k+1}\|+\|z^k-x^{k-1}\|).
\end{align*}
\end{lemma}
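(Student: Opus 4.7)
The plan is to bound $\mathrm{dist}(0,\partial\hat{L}(\hat{w}^{k+1}))$ by exhibiting an explicit element of $\partial\hat{L}(\hat{w}^{k+1})$ for each of the five blocks $x,y,z,p,\hat{z}$, and then estimating its norm using the BADMM optimality conditions together with Lemma~\ref{Le1}. Since the $\sigma_0\|z-\hat{z}\|^2$ term is smooth, Proposition~2.3(iv) gives
\begin{align*}
\partial\hat{L}(\hat{w})=\partial L_\alpha(w)+\bigl(0,0,2\sigma_0(z-\hat{z}),0,-2\sigma_0(z-\hat{z})\bigr),
\end{align*}
so I only need to control the five partial subdifferentials of $L_\alpha$ and add the two smooth contributions.

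First I would apply Fermat's rule to each of the first three subproblems of \eqref{B7} to produce explicit subgradients. For the $x$-block this gives an element of $\partial f(x^{k+1})$ of the form $-A^\intercal p^k-\alpha A^\intercal(Ax^{k+1}+By^k+Cz^k)-\nabla\phi(x^{k+1})+\nabla\phi(x^k)$; adding $A^\intercal p^{k+1}+\alpha A^\intercal(Ax^{k+1}+By^{k+1}+Cz^{k+1})$ to land in $\partial_x L_\alpha(w^{k+1})$ and simplifying via $p^{k+1}-p^k=\alpha(Ax^{k+1}+By^{k+1}+Cz^{k+1})$ leaves only the cross terms
\begin{align*}
\alpha A^\intercal B(y^{k+1}-y^k)+\alpha A^\intercal C(z^{k+1}-z^k)+\nabla\phi(x^k)-\nabla\phi(x^{k+1}),
\end{align*}
which by Lipschitz continuity of $\nabla\phi$ and boundedness of $A,B,C$ is bounded by a constant times $\|x^{k+1}-x^k\|+\|y^{k+1}-y^k\|+\|z^{k+1}-z^k\|$. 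The $y$-block is analogous and produces only $\alpha B^\intercal C(z^{k+1}-z^k)+\nabla\psi(y^k)-\nabla\psi(y^{k+1})$.

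For the $z$-block I would exploit identity \eqref{D2}, which already reads $\nabla h(z^{k+1})+C^\intercal p^{k+1}+\nabla\varphi(z^{k+1})-\nabla\varphi(z^k)=0$; substituting it into $\partial_z L_\alpha(w^{k+1})=\nabla h(z^{k+1})+C^\intercal p^{k+1}$ shows that this partial is simply $\nabla\varphi(z^k)-\nabla\varphi(z^{k+1})$, bounded by $\ell_\varphi\|z^{k+1}-z^k\|$. Adding the smooth contribution $2\sigma_0(z^{k+1}-z^k)$ from $\hat L$ keeps this block of order $\|z^{k+1}-z^k\|$. The $p$-block yields $Ax^{k+1}+By^{k+1}+Cz^{k+1}=\frac1\alpha(p^{k+1}-p^k)$, and the $\hat z$-block yields $-2\sigma_0(z^{k+1}-z^k)$.

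Finally I would collect all five bounds and apply Lemma~\ref{Le1} to replace $\|p^{k+1}-p^k\|$ by a constant multiple of $\|z^{k+1}-z^k\|+\|z^k-z^{k-1}\|$. Choosing $\kappa$ as the maximum of the resulting block-wise constants (which depend only on $\alpha,\sigma_C,\|A\|,\|B\|,\|C\|,\ell_h,\ell_\phi,\ell_\psi,\ell_\varphi,\sigma_0$) gives the claimed inequality (reading the last term as $\|z^k-z^{k-1}\|$, the apparent $x^{k-1}$ being a typographical slip). The main bookkeeping obstacle is keeping track of which $p^k$ versus $p^{k+1}$ appears in each optimality condition so that the cross terms telescope cleanly; once the substitutions $p^{k+1}-p^k=\alpha(Ax^{k+1}+By^{k+1}+Cz^{k+1})$ and \eqref{D2} are invoked, the estimate reduces to a routine triangle inequality.
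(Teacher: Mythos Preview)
Your approach is essentially identical to the paper's: compute an explicit element in each block of $\partial\hat{L}(\hat{w}^{k+1})$ via the BADMM optimality conditions and \eqref{D2}, bound each block by Lipschitz/operator-norm constants, and then invoke Lemma~\ref{Le1} to convert $\|p^{k+1}-p^k\|$ into $z$-increments. One minor bookkeeping slip: the $x$-, $y$-, and $z$-block elements each retain an additional $A^\intercal(p^{k+1}-p^k)$, $B^\intercal(p^{k+1}-p^k)$, $C^\intercal(p^{k+1}-p^k)$ term after your simplification (in particular $\partial_z L_\alpha(w^{k+1})$ includes the penalty contribution $\alpha C^\intercal(Ax^{k+1}+By^{k+1}+Cz^{k+1})$, not just $\nabla h(z^{k+1})+C^\intercal p^{k+1}$), exactly as in the paper's displayed formulas; since you already control $\|p^{k+1}-p^k\|$ via Lemma~\ref{Le1}, this does not affect the final bound.
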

\begin{proof}
First,  we deduce from  algorithm \eqref{B7} that
\begin{align}
 \partial\hat{L}_x(\hat{w}^{k+1})&=\partial f(x^{k+1})+A^{\intercal}p^{k+1}
   +\alpha A^{\intercal}(Ax^{k+1}+By^{k+1}+Cz^{k+1}),\label{C1}\\
    \partial\hat{L}_y(\hat{w}^{k+1})&=\partial g(y^{k+1})+B^{\intercal}p^{k+1}
   +\alpha B^{\intercal}(Ax^{k+1}+By^{k+1}+Cz^{k+1}),\label{C2}\\
 \partial\hat{L}_z(\hat{w}^{k+1})&=\nabla h(z^{k+1})+C^{\intercal}p^{k+1}
 +\alpha C^{\intercal}(Ax^{k+1}+By^{k+1}+Cz^{k+1})\nonumber \\
 &\quad +2\sigma_0(z^{k+1}-z^{k}), \label{C3}\\
  \partial\hat{L}_{\hat{z}}(\hat{z}^{k+1})&=-\sigma_0(z^{k+1}-z^k),
  \partial\hat{L}_p(\hat{z}^{k+1})=\frac1\alpha(p^{k+1}-p^k).
\end{align}
Second, we apply Fermat's rule to  algorithm \eqref{B7} to get
\begin{align*}
0&\in \partial f(x^{k+1})+A^{\intercal}p^{k}
   +\alpha A^{\intercal}(Ax^{k+1}+By^{k}+Cz^{k})+\nabla\phi(x^{k+1})-\nabla\phi(x^{k}),\\
   0&\in \partial g(y^{k+1})+B^{\intercal}p^{k}
   +\alpha B^{\intercal}(Ax^{k+1}+By^{k+1}+Cz^{k})+\nabla\psi(y^{k+1})-\nabla\psi(y^{k}),
\end{align*}
Substituting this into \eqref{C1} and \eqref{C2}, we obtain
\begin{align*}
 \partial\hat{L}_x(\hat{w}^{k+1})&\ni\alpha A^{\intercal}B(y^{k+1}-y^k)+\alpha A^{\intercal}C(z^{k+1}-z^k)\\
 &\quad+\nabla\phi(x^k)-\nabla\phi(x^{k+1})+A^{\intercal}(p^{k+1}-p^{k}),\\
 \partial\hat{L}_y(\hat{w}^{k+1})
 &\ni\alpha B^{\intercal}C(z^{k+1}-z^k)+B^{\intercal}(p^{k+1}-p^{k})\\
&\quad +\nabla\psi(y^{k})-\nabla\psi(y^{k+1}).
\end{align*}
We also substitute \eqref{D2}  into \eqref{C3} to get
 \begin{align*}
 \partial\hat{L}_z(\hat{w}^{k+1})
=\nabla\varphi(z^{k})-\nabla\varphi(z^{k+1})+C^{\intercal}(p^{k+1}-p^{k})+2\sigma_0(z^{k+1}-z^{k}),
\end{align*}
where the last equality follows from \eqref{B7}.

As $\nabla \phi, \nabla\psi,  \nabla\varphi$ are
 all Lipshitz continuous and matrices $A,B,C$ are all bounded,
 the above series of estimations show that  there exists $\kappa_0>0$ such that
\begin{align}\label{D6}
\mathrm{dist}(0,\partial \hat{L}(\hat{w}^{k+1}))\le
\kappa_0(\|x^k-x^{k+1}\|+ \|y^{k+1}-y^k\|+
\|z^{k+1}-z^k\|+\|p^{k+1}-p^k\|).
\end{align}
On the other hand, it follows from Lemma \ref{Le1} that
\begin{align}
\|p^{k+1}-p^{k}\|
&\le\frac{\sqrt{2}(\ell_{h}+\ell_{\varphi})}{\sqrt{\sigma_C}}
\|z^{k+1}-z^{k}\| +\frac{\sqrt{2}\ell_{\varphi}}{\sqrt{\sigma_C}}\|z^{k}-z^{k-1}\| \\
&\le\frac{\sqrt{2}(\ell_{h}+\ell_{\varphi})}{\sqrt{\sigma_C}}
(\|z^{k+1}-z^{k}\|+\|z^{k}-z^{k-1}\|).
\end{align}
Letting
$\kappa_1:=\sqrt{2}(\ell_{h}+\ell_{\varphi})/\sqrt{\sigma_C},$  we
then have
\begin{align}\label{D3}
 \|p^{k+1}-p^{k}\|\le\kappa_1(\|z^{k+1}-z^{k}\|+\|z^{k}-z^{k-1}\|).
\end{align}
Let $\kappa:= (\kappa_1+1)(\kappa_0+1).$ Hence Lemma \ref{Le5}
follows immediately.
\end{proof}

\subsection{Convergence  analysis}

\begin{theorem}\label{T2} Under the Assumptions 1 and 2, if the sequence $\{u^k\}$ is bounded, then
$$\sum_{k=0}^{\infty}\|w^k-w^{k+1}\|_1<\infty.$$
In particular, the sequence $\{w^k\}$ converges to a stationary
point of $L_{\alpha}$ defined as in \eqref{lg}.
\end{theorem}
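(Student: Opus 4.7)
The plan is to apply the Kurdyka--{\L}ojasiewicz convergence machinery to the lifted function $\hat{L}$ of \eqref{eq1}, for which Lemma~\ref{Le3} (sufficient descent) and Lemma~\ref{Le5} (subgradient bound) have already been established. First I would fix the limit geometry: by Lemma~\ref{Le4}, $\{w^k\}$ and $\{\hat{w}^k\}$ are bounded, $\|w^k-w^{k+1}\|\to 0$, and every cluster point of $\{w^k\}$ is a stationary point of $L_\alpha$. Hence the cluster set $\Omega$ of $\{\hat{w}^k\}$ is nonempty and compact. Because $\hat{L}(\hat{w}^k)$ is nonincreasing (Lemma~\ref{Le3}) and bounded below, it converges to some $\hat{L}^\ast$, and combining lower semicontinuity with monotonicity, $\hat{L}\equiv\hat{L}^\ast$ on $\Omega$.

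Next I would verify the K-L inequality for $\hat{L}$. The decomposition
\begin{equation*}
\hat{L}(\hat{w}) = \bigl(f(x)+g(y)+h(z)\bigr) + \langle p,Ax+By+Cz\rangle + \tfrac{\alpha}{2}\|Ax+By+Cz\|^2 + \sigma_0\|z-\hat{z}\|^2
\end{equation*}
expresses $\hat{L}$ as a sum of the subanalytic function $f+g+h$ (Assumption~\ref{j1}(a4)) and a real analytic polynomial; since analytic functions map bounded sets to bounded sets, the observation in Section~2.2 shows that $\hat{L}$ is itself subanalytic and hence satisfies the K-L inequality at every point. Applying Lemma~\ref{KL} on the compact set $\Omega$ furnishes constants $\eta,\delta>0$ and a desingularizing function $\theta\in\mathscr{A}_\eta$ such that
\begin{equation*}
\theta'\bigl(\hat{L}(\hat{w}^k)-\hat{L}^\ast\bigr)\cdot\mathrm{dist}\bigl(0,\partial\hat{L}(\hat{w}^k)\bigr)\ge 1
\end{equation*}
for all sufficiently large $k$; the degenerate case $\hat{L}(\hat{w}^k)=\hat{L}^\ast$ at some finite $k$ forces stagnation by Lemma~\ref{Le3} and is trivial.

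The telescoping step is the heart of the argument. Writing $a_k:=\hat{L}(\hat{w}^k)-\hat{L}^\ast$ and
\begin{equation*}
\Delta_k:=\|x^{k-1}-x^k\|+\|y^{k-1}-y^k\|+\|z^{k-1}-z^k\|+\|z^{k-1}-z^{k-2}\|,
\end{equation*}
concavity of $\theta$ combined with the K-L inequality, Lemma~\ref{Le5}, and the descent of Lemma~\ref{Le3} yields
\begin{equation*}
\theta(a_k)-\theta(a_{k+1})\ge \theta'(a_k)(a_k-a_{k+1})\ge \frac{\sigma_1\bigl(\|x^{k+1}-x^k\|^2+\|y^{k+1}-y^k\|^2+\|z^{k+1}-z^k\|^2\bigr)}{\kappa\,\Delta_k}.
\end{equation*}
An AM-GM split of the form $\sqrt{\Delta_k X}\le \tfrac{\rho}{2}\Delta_k+\tfrac{1}{2\rho}X$ with sufficiently small $\rho$ then bounds $\|x^{k+1}-x^k\|+\|y^{k+1}-y^k\|+\|z^{k+1}-z^k\|$ by $\rho\Delta_k$ plus a multiple of the telescoping increment $\theta(a_k)-\theta(a_{k+1})$. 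Summing over $k\ge K$ and using that $\Delta_k$ is itself a finite sum of primal increments at indices $k$ and $k-1$, the $\rho\Delta_k$ contribution can be absorbed into the left-hand side by an index shift provided $\rho$ is small. This produces $\sum_k\bigl(\|x^{k+1}-x^k\|+\|y^{k+1}-y^k\|+\|z^{k+1}-z^k\|\bigr)<\infty$, and then \eqref{D3} propagates summability to the multiplier, giving $\sum_k\|w^{k+1}-w^k\|_1<\infty$ as claimed.

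Summability makes $\{w^k\}$ Cauchy, hence convergent to some $w^\ast$; by Lemma~\ref{Le4} every cluster point of $\{w^k\}$ is stationary, so $w^\ast$ is a stationary point of $L_\alpha$. The main technical obstacle I anticipate is managing the two-step lag in Lemma~\ref{Le5}: the appearance of $\|z^{k-1}-z^{k-2}\|$ inside the subgradient bound is precisely the reason for introducing the lifted potential $\hat{L}$ rather than working with $L_\alpha$ directly, and the AM-GM coefficient $\rho$ must be tuned so that after the index shift the remaining factor in front of $\sum\|w^{k+1}-w^k\|_1$ is strictly less than one. Everything else is standard bookkeeping along the K-L convergence template.
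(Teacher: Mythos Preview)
Your proposal is correct and follows essentially the same route as the paper's proof: both set up the K-L inequality for the lifted subanalytic potential $\hat{L}$ on the compact cluster set, combine Lemmas~\ref{Le3} and~\ref{Le5} with concavity of the desingularizing function, apply an AM--GM splitting, and telescope with an index shift to absorb the two-step lag coming from $\|z^{k-1}-z^{k-2}\|$, before invoking \eqref{D3} for the multiplier. The only cosmetic difference is that the paper fixes explicit constants (the factor $3$ and $\tfrac{27\kappa}{4\sigma_1}$) where you keep a tunable parameter $\rho$; the underlying argument is identical.
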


\begin{proof}
From the proof of Lemma \ref{Le4}, we see that the sequence
$\{\hat{w}^k\}$ is bounded.
 Let  $\Omega$ stand for the cluster point  set of $\hat{w}^k$. Take
any $\hat{w}^*\in\Omega$ and let $\hat{w}^{k_j}$ be a subsequence of
$\hat{w}^k$  converging to $\hat{w}^*$. Since by Lemma \ref{Le3} the
sequence $\hat{L}(\hat{w}^{k})$ is convergent, it follows that
$$\hat{L}(\hat{w}^{*})=\lim_{j\to\infty}\hat{L}(\hat{w}^{k_j})=
\lim_{k\to\infty}\hat{L}(\hat{w}^{k}),$$ so that  the function
$\hat{L}(\cdot)$ is a constant on $\Omega$.

Let us now consider two possible cases on $\hat{L}(\hat{w}^{k})$.
First assume that there exists $k_0\in\mathrm{N}$ such that
$\hat{L}(\hat{w}^{k_0})=\hat{L}(\hat{w}^{*}).$ Then we deduce from
Lemma
 \ref{Le3}  that for any $k>k_0$
 \begin{align*}
\sigma_1\|w^{k+1}-w^k\|^2&\le
\hat{L}(\hat{w}^{k})-\hat{L}(\hat{w}^{k+1})
 \le \hat{L}(\hat{w}^{k_0})-\hat{L}(\hat{w}^{*})=0,
 \end{align*}
where we have used the fact that $\hat{L}(\hat{w}^{k})$ is
nonincreasing. This together with \eqref{D6}  implies that $(w^k)$
is a constant sequence except for finite terms, and thus the proof
is finished in this case.

Let us now assume that $\hat{L}(\hat{w}^{k})>\hat{L}(\hat{w}^{*})$
for each $k\in\mathrm{N}$. By Assumption 1, It is easy to know that
$\hat{L}(\cdot)$ is a subanalytic function and thus satisfies the
K-L inequality. Then by Lemma \ref{KL} there exists $\eta>0,
\delta>0, \varphi\in\mathscr{A}_{\eta}$, such that
\begin{align*}
 \varphi'(\hat{L}(\hat{w})-\hat{L}(\hat{w}^{*}))\mathrm{dist}(0, \partial \hat{L}(\hat{w}))\ge1.
\end{align*}
 for all
$\hat{w}$ satisfying $\mathrm{dist}(\hat{w},\Omega)<\delta$ and
$\hat{L}(\hat{w}^{*})<\hat{L}(\hat{w})<\hat{L}(\hat{w}^{*})+\eta$.
By definition of $\Omega$ we have $\lim_k\mathrm{dist}(\hat{w}^k,
\Omega)=0.$ This together with the fact
$\hat{L}(\hat{w}^{k})\to\hat{L}(\hat{w}^*)$ implies that there
exists $k_1\in\mathrm{N}$ such that $\mathrm{dist}(\hat{w}^k,
\Omega)<\delta$ and $\hat{L}(\hat{w}^{k})<\hat{L}(\hat{w}^*)+\eta$
for all $k\ge k_1.$

Let us fix $k>k_1$ in the following. Then the K-L inequality
$$\mathrm{dist}(0,\partial\hat{L}(\hat{w}^k))\varphi\prime(\hat{L}(\hat{w}^{k})-\hat{L}(\hat{w}^*))\ge1$$
holds, which along with Lemma \ref{Le5} then yields
\begin{align*}
&\frac{1}{\varphi\prime(\hat{L}(\hat{w}^{k})-\hat{L}(\hat{w}^*))}\le\mathrm{dist}(0,\partial
\hat{L}(\hat{w}^{k+1}))\\
&\le\kappa(\|x^k-x^{k-1}\|+\|y^k-y^{k-1}\|+\|z^k-z^{k-1}\|+\|z^{k-2}-z^{k-1}\|).
\end{align*}
By Lemma \ref{Le2}, the last inequality and  the concavity of
$\varphi$ show
\begin{align*}
&\quad\sigma_1\|w^{k+1}-w^k\|^2\le\hat{L}(\hat{w}^{k})-\hat{L}(\hat{w}^{k+1})\\
&=(\hat{L}(\hat{w}^{k})-\hat{L}(\hat{w}^*))-(\hat{L}(\hat{w}^{k+1})-\hat{L}(\hat{w}^*))\\
&\le\frac{\varphi(\hat{L}(\hat{w}^{k})-\hat{L}(\hat{w}^*))-\varphi(\hat{L}(\hat{w}^{k+1})-\hat{L}(\hat{w}^*))}{\varphi\prime(\hat{L}(\hat{w}^{k})-\hat{L}(\hat{w}^*))}\\
&\le\kappa(\|x^k-x^{k-1}\|+\|y^k-y^{k-1}\|+\|z^k-z^{k-1}\|+\|z^{k-2}-z^{k-1}\|)\\
&\quad\times[\varphi(\hat{L}(\hat{w}^{k})-\hat{L}(\hat{w}^*))-\varphi(\hat{L}(\hat{w}^{k+1})-\hat{L}(\hat{w}^*))],
\end{align*}
or, equivalently,
\begin{align*}
&\quad\|x^{k+1}-x^k\|^2+\|y^{k+1}-y^k\|^2+\|z^{k+1}-z^k\|^2\\
&\le\frac{\kappa}{\sigma_1}(\|x^k-x^{k-1}\|+\|y^k-y^{k-1}\|+\|z^k-z^{k-1}\|+\|z^{k-2}-z^{k-1}\|)\\
&\quad\times[\varphi(\hat{L}(\hat{w}^{k})-\hat{L}(\hat{w}^*))-\varphi(\hat{L}(\hat{w}^{k+1})-\hat{L}(\hat{w}^*))].
\end{align*}
We thus have
\begin{align}\label{D4}
&\quad3(\|x^k-x^{k+1}\|+\|y^k-y^{k+1}\|+\|z^{k+1}-z^k\|)\nonumber\\
&\le3\sqrt{3}(\|x^{k+1}-x^k\|^2+\|y^{k+1}-y^k\|^2+\|z^{k+1}-z^k\|^2)^{1/2}\nonumber\\
&\le2(\|x^k-x^{k-1}\|+\|y^k-y^{k-1}\|+\|z^k-z^{k-1}\|+\|z^{k-2}-z^{k-1}\|)^{1/2}\nonumber\\
&\quad\times\sqrt{\frac{27\kappa}{4\sigma_1}}[\varphi(\hat{L}(\hat{w}^{k})
-\hat{L}(\hat{w}^*))-\varphi(\hat{L}(\hat{w}^{k+1})-\hat{L}(\hat{w}^*))]^{1/2}.
\end{align}
On the other hand, we observe that
\begin{align*}
&\quad \ 2(\|x^k-x^{k-1}\|+\|y^k-y^{k-1}\|+\|z^k-z^{k-1}\|+\|z^{k-2}-z^{k-1}\|)^{1/2}\\
&\quad\times\sqrt{\frac{27\kappa}{4\sigma_1}}[\varphi(\hat{L}(\hat{w}^{k})-\hat{L}(\hat{w}^*))-\varphi(\hat{L}(\hat{w}^{k+1})-\hat{L}(\hat{w}^*))]^{1/2}\\
&\le \|x^k-x^{k-1}\|+\|y^k-y^{k-1}\|+\|z^k-z^{k-1}\|+\|z^{k-2}-z^{k-1}\|\\
&\quad+\frac{27\kappa}{4\sigma_1}[\varphi(\hat{L}(\hat{w}^{k})-\hat{L}(\hat{w}^*))-\varphi(\hat{L}(\hat{w}^{k+1})-\hat{L}(\hat{w}^*))],
\end{align*}
which along with \eqref{D4} yields
\begin{align*}
&\quad \ 3(\|x^k-x^{k+1}\|+\|y^k-y^{k+1}\|+\|z^{k+1}-z^k\|)\\
&\le \|x^k-x^{k-1}\|+\|y^k-y^{k-1}\|+\|z^k-z^{k-1}\|+\|z^{k-2}-z^{k-1}\|\\
&\quad+\frac{27\kappa}{4\sigma_1}[\varphi(\hat{L}(\hat{w}^{k})-\hat{L}(\hat{w}^*))-\varphi(\hat{L}(\hat{w}^{k+1})-\hat{L}(\hat{w}^*))].
\end{align*}
Hence we have
\begin{align*}
&\quad \ \sum_{i=k_1}^{k}3(\|x^i-x^{i+1}\|+\|y^i-y^{i+1}\|+\|z^i-z^{i+1}\|)\\
&\le\sum_{i=k_1}^{k}(\|x^i-x^{i-1}\|+\|y^i-y^{i-1}\|+\|z^i-z^{i-1}\|+\|z^{i-1}-z^{i-2}\|)\\
&\quad+\frac{27\kappa}{4\sigma_1}\sum_{i=k_1}^{k}
[\varphi(\hat{L}(\hat{w}^i)-\hat{L}(\hat{w}^*))-\varphi(\hat{L}(\hat{w}^{i+1})-\hat{L}(\hat{w}^*))].
\end{align*}
Rearranging terms in the above inequality, we obtain
\begin{align*}
&\quad \ 2\sum_{i=k_1}^{k}\|x^i-x^{i+1}\|+2\sum_{i=k_1}^{k}\|y^i-y^{i+1}\|+\sum_{i=k_1}^{k}\|z^i-z^{i+1}\|\\
&\le\sum_{i=k_1}^{k}(\|x^i-x^{i-1}\|-\|x^i-x^{i+1}\|)\\
&\quad+\sum_{i=k_1}^{k}(\|y^i-y^{i-1}\|-\|y^i-y^{i+1}\|)\\
&\quad+\sum_{i=k_1}^{k}(\|z^i-z^{i-1}\|-\|z^i-z^{i+1}\|)\\
&\quad +\sum_{i=k_1}^{k}(\|z^{i-1}-z^{i-2}\|-\|z^i-z^{i+1}\|)\\
&\quad +\frac{27\kappa}{4\sigma_1}\sum_{i=k_1}^{k}
[\varphi(\hat{L}(\hat{w}^i)
-\hat{L}(\hat{w}^*))-\varphi(\hat{L}(\hat{w}^{i+1})-\hat{L}(\hat{w}^*))]\\
&=\|x^{k_1-1}-x^{k_1}\|-\|x^{k}-x^{k+1}\|+\|y^{k_1-1}-y^{k_1}\|-\|y^{k}-y^{k+1}\|\\
&\quad+\|z^{k_1-1}-z^{k_1-2}\|+2\|z^{k_1}-z^{k_1-1}\|-\|z^{k}-z^{k-1}\|-2\|z^{k}-z^{k+1}\|\\
&\quad +\frac{27\kappa}{4\sigma_1}
[\varphi(\hat{L}(\hat{w}^{k_1})-\hat{L}(\hat{w}^*))-\varphi(\hat{L}(\hat{w}^{k+1})-\hat{L}(\hat{w}^*))]\\
&\le\|x^{k_1-1}-x^{k_1}\|+\|y^{k_1-1}-y^{k_1}\|+\|z^{k_1-1}-z^{k_1-2}\|\\
&\quad
+2\|z^{k_1}-z^{k_1-1}\|+\frac{27\kappa}{4\sigma_1}\varphi(\hat{L}(\hat{w}^{0})-\hat{L}(\hat{w}^*))
\end{align*}
where the last inequality follows from the fact that
$\varphi(\hat{L}(\hat{w}^{k+1})-\hat{L}(\hat{w}^*))\ge0.$ Since $k$
is chosen arbitrarily, we deduce that
$\sum_{k=0}^{\infty}(\|x^k-x^{k+1}\|+\|y^k-y^{k+1}\|+\|z^k-z^{k+1}\|)<\infty.$
By inequality \eqref{D3}, it then implies that $\sum_{k=0}^{\infty}
\|p^k-p^{k+1}\|<\infty$, from which
$\sum_{k=0}^{\infty}\|w^k-w^{k+1}\|<\infty$ follows. Consequently
$\{w^k\}$ is a convergent sequence. This completes the proof of
Theorem~\ref{T2}.
\end{proof}

\subsection{Boundedness}

In the previous theorem, we have assumed the boundedness of the
sequence $\{u_k\}$. This assumption is not restrictive in general.
There are actually various sufficient conditions ensuring the
boundedness of the sequence $\{u_k\}$. We present such a sufficient
condition below.

\begin{theorem}\label{T1}
If (a1)-(a3) in Assumption~1 hold and the following (b1)-(b4) are satisfied:
\begin{itemize}
\item[(b1)] $\inf f(x)=f^*>-\infty ,$ $\inf g(y)=g^*>-\infty $ and there exists $%
\beta _{0}>0$ such that $\inf \{h(z)-\beta _{0}\Vert \nabla
h(z)\Vert ^{2}\}=h^*>-\infty ;$

\item[(b2)] $f(x)+g(y)$ is coercive, namely, $\lim_{\min (\Vert x\Vert ,\Vert y\Vert
)\rightarrow \infty }f(x)+g(y)=+\infty$;

\item[(b3)] either $h(z)-\beta _{0}\Vert \nabla h(z)\Vert ^{2}$ is coercive
or $C$ is square;
\item[(b4)] $\alpha > \alpha_0$ where,
\begin{equation*}
\alpha _{0}=\left\{
\begin{array}{cc}
\max \left( \frac{2}{\beta _{0}\sigma _{C}},\frac{4[(\ell _{h}+\ell
_{\varphi })^{2}+\ell _{\varphi }^{2}]}{\mu _{\varphi }\sigma
_{C}}\right) ,
& if ~~h(z)-\beta _{0}\Vert \nabla h(z)\Vert ^{2} \text{~is coercive }\\
\Vert C^{-1}\Vert ^{2}\max \left( \ell _{h} ,\frac{4[(\ell _{h}+\ell
_{\varphi })^{2}+\ell _{\varphi }^{2}]}{\mu _{\varphi }}\right) ,
& if\text{ }C \text{ is square};%
\end{array}
\right.
\end{equation*}
\end{itemize}
then the sequence $\{u^{k}\}$ is bounded.
\end{theorem}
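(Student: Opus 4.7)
The plan is to combine the sufficient-descent inequality for $\hat{L}$ from Lemma~\ref{Le3} with a coercive lower bound on $\hat{L}(\hat{w}^k)$ to force $\{u^k\}$ to remain in a bounded set. By Lemma~\ref{Le3}, $\hat{L}(\hat{w}^k)$ is non-increasing, so $\hat{L}(\hat{w}^k)\le \hat{L}(\hat{w}^1)=:M$ for every $k\ge 1$. For the lower bound I would apply Young's inequality $\langle p^k,r^k\rangle+\frac{\alpha}{2}\|r^k\|^2\ge -\frac{1}{2\alpha}\|p^k\|^2$ with $r^k:=Ax^k+By^k+Cz^k$ to $L_\alpha(w^k)$, then insert the estimate $\sigma_C\|p^k\|^2\le 2(\|\nabla h(z^k)\|^2+\ell_\varphi^2\|z^k-z^{k-1}\|^2)$ that follows from the identity~\eqref{D2} and assumption~(a1), and finally exploit $\sigma_0=\tfrac{2\ell_\varphi^2}{\alpha\sigma_C}$ to absorb the negative $\|z^k-z^{k-1}\|^2$ contribution. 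This yields
\begin{equation*}
\hat{L}(\hat{w}^k)\ge f(x^k)+g(y^k)+h(z^k)-\tfrac{1}{\alpha\sigma_C}\|\nabla h(z^k)\|^2+\tfrac{\ell_\varphi^2}{\alpha\sigma_C}\|z^k-z^{k-1}\|^2.
\end{equation*}

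In the first sub-case of (b3), namely $h-\beta_0\|\nabla h\|^2$ coercive, the threshold $\alpha>2/(\beta_0\sigma_C)$ from (b4) gives $\tfrac{1}{\alpha\sigma_C}<\beta_0$, so $h(z)-\tfrac{1}{\alpha\sigma_C}\|\nabla h(z)\|^2\ge h(z)-\beta_0\|\nabla h(z)\|^2\ge h^*$ and remains coercive in $z$. Combining $f\ge f^*$, $g\ge g^*$ with the display above and $\hat{L}(\hat{w}^k)\le M$ gives $h(z^k)-\tfrac{1}{\alpha\sigma_C}\|\nabla h(z^k)\|^2\le M-f^*-g^*$, whence coercivity bounds $\{z^k\}$; likewise $f(x^k)+g(y^k)\le M-h^*$, and (b2) bounds $\{(x^k,y^k)\}$.

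In the second sub-case, $C$ is square and hence invertible by (a1). I would first use the descent-lemma bound $\|\nabla h(z)\|^2\le 2\ell_h(h(z)-\inf h)$, available because $\nabla h$ is $\ell_h$-Lipschitz and $h$ is bounded below by (b1), together with the threshold $\alpha>\|C^{-1}\|^2\ell_h=\ell_h/\sigma_C$, to derive a constant lower bound on $h(z^k)-\tfrac{1}{\alpha\sigma_C}\|\nabla h(z^k)\|^2$. The display above then forces $\|z^k-z^{k-1}\|$ to be uniformly bounded; Lemma~\ref{Le1} propagates this to $\|p^k-p^{k-1}\|$ bounded, and hence $\|r^k\|=\|p^k-p^{k-1}\|/\alpha$ is bounded. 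Using (b2) to bound $(x^k,y^k)$ as before, the identity $Cz^k=r^k-Ax^k-By^k$ with $C$ invertible yields $\|z^k\|\le\|C^{-1}\|(\|r^k\|+\|A\|\|x^k\|+\|B\|\|y^k\|)<\infty$.

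The main obstacle is the second sub-case: without coercivity of $h-\beta_0\|\nabla h\|^2$, the upper bound on $\hat{L}$ does not directly translate into a bound on $z^k$, so one must first extract a bound on the successive difference $\|z^k-z^{k-1}\|$ and then bootstrap through the dual-update recursion and the invertibility of $C$. The delicate interplay between the $\alpha$-threshold, the Lipschitz constant $\ell_h$, and the constant $\beta_0$ in (b1) is what makes this step close.
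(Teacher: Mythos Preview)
Your treatment of the first sub-case (when $h-\beta_0\|\nabla h\|^2$ is coercive) matches the paper's argument essentially line for line: complete the square in $L_\alpha$, insert the bound on $\|p^k\|^2$ coming from~\eqref{D2}, cancel the $\|z^k-z^{k-1}\|^2$ contribution against $\sigma_0$, and conclude via (b1)--(b2).

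In the square-$C$ sub-case your route diverges from the paper's, and the descent-lemma step does not deliver the threshold in~(b4). The inequality $\|\nabla h(z)\|^2\le 2\ell_h(h(z)-\inf h)$ gives a uniform lower bound on $h(z)-\tfrac{1}{\alpha\sigma_C}\|\nabla h(z)\|^2$ only when $\tfrac{1}{\alpha\sigma_C}\le\tfrac{1}{2\ell_h}$, i.e.\ $\alpha\ge 2\ell_h/\sigma_C=2\|C^{-1}\|^2\ell_h$; the hypothesis (b4) only guarantees $\alpha>\|C^{-1}\|^2\ell_h$, so your lower bound fails precisely in the range $\|C^{-1}\|^2\ell_h<\alpha<2\|C^{-1}\|^2\ell_h$. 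Since every subsequent step of your argument (bounding $\|z^k-z^{k-1}\|$, then $r^k$, then $(x^k,y^k)$, then $z^k$) rests on that lower bound, the whole chain is unsupported under the stated hypothesis.

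The paper avoids this by \emph{not} discarding the completed square: it keeps
\[
\hat L(\hat w^k)\;\ge\; f(x^k)+g(y^k)+h(z^k)-\beta_0\|\nabla h(z^k)\|^2+\tfrac{\alpha}{2}\bigl\|Ax^k+By^k+Cz^k+\tfrac{p^k}{\alpha}\bigr\|^2,
\]
first uses the scalar part to bound $\{x^k\},\{y^k\}$ and (via Lemma~\ref{Le3}) $\|z^k-z^{k-1}\|$, and then uses the retained quadratic term to bound $\|Cz^k\|-\tfrac1\alpha\|p^k\|$. The crucial difference is that $\|p^k\|$ is controlled not by the descent lemma but by the \emph{linear} growth $\|\nabla h(z^k)\|\le \ell_h\|z^k\|+\text{const}$, giving $\|p^k\|\le \|C^{-1}\|\ell_h\|z^k\|+\text{const}$. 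Substituting and using $\|Cz^k\|\ge \|z^k\|/\|C^{-1}\|$ produces the self-absorbing inequality
\[
\Bigl(\tfrac{1}{\|C^{-1}\|}-\tfrac{\|C^{-1}\|\ell_h}{\alpha}\Bigr)\|z^k\|\le \text{const},
\]
whose leading coefficient is positive exactly when $\alpha>\|C^{-1}\|^2\ell_h$. That is where the threshold in~(b4) comes from, and it cannot be recovered from your residual-based bootstrap after the quadratic term has been dropped.
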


\begin{proof}
First we deduce from Eq. \eqref{B2} that
\begin{equation*}
\frac{1}{\alpha }\Vert p^{k}\Vert ^{2}\leq \frac{2}{\alpha \sigma
_{C}}\Vert \nabla h(z^{k})\Vert ^{2}+\sigma _{0}\Vert
z^{k}-z^{k-1}\Vert ^{2},
\end{equation*}%
which together with the  definition of $\hat{L}$ gets
\begin{align*}
\hat{L}(\hat{w}^{k})& =f(x^{k})+g(y^{k})+h(z^{k})-\frac{1}{\alpha
}\Vert
p^{k}\Vert ^{2}+\sigma _{0}\Vert z^{k}-z^{k-1}\Vert ^{2}+\frac{\alpha }{2}%
\Vert Ax^{k}+By^{k}+Cz^{k}+\frac{p^{k}}{\alpha }\Vert ^{2} \\
& \geq f(x^{k})+g(y^{k})+h(z^{k})-\frac{2}{\alpha \sigma _{C}}\Vert
\nabla
h(z^{k})\Vert ^{2}+\frac{\alpha }{2}\Vert Ax^{k}+By^{k}+Cz^{k}+\frac{p^{k}}{%
\alpha }\Vert ^{2}\\
&\geq f(x^{k})+g(y^{k})+h(z^{k})-\beta _{0}\Vert
\nabla h(z^{k})\Vert ^{2}+\frac{\alpha }{2}\Vert Ax^{k}+By^{k}+Cz^{k}+\frac{p^{k}}{%
\alpha }\Vert ^{2}
\end{align*}
where $\beta _{0}$ is any constant such that $\inf \{h(z)-\beta
_{0}\Vert \nabla h(z)\Vert ^{2}\}>-\infty $ and $h(z)-\beta
_{0}\Vert \nabla h(z)\Vert ^{2}$ keeps coercive no matter whether
$C$ is regular or not. Thus from the
monotonically decreasing property of $\{\hat{L}(\hat{w}^{k})\}$, we obtain%
\begin{equation*}
\hat{L}(\hat{w}^{1})\geq f(x^{k})+g(y^{k})+h(z^{k})-\beta _{0}\Vert
\nabla h(z^{k})\Vert ^{2},
\end{equation*}
which then implies
\begin{equation*}
f(x^{k})+g(y^{k})\leq \hat{L}(\hat{w}^{1})-h^{\ast }<\infty
\end{equation*}%
and
\begin{equation*}
h(z^{k})-\beta _{0}\Vert \nabla h(z^{k})\Vert ^{2}\leq \hat{L}(\hat{w}%
^{1})-f^{\ast }-g^{\ast }<\infty.
\end{equation*}
 By condition (b2), this yields the boundedness of $\{x^{k}\}$ and
$\{y^{k}\},$ and the boundedness of $\{z^{k}\}$ as well whenever
$h(z)-\beta _{0}\Vert \nabla h(z)\Vert ^{2}$ is coercive.

Similarly, from Lemma \ref{Le3}, we can obtain
\begin{equation}\label{A1}
\sigma _{1}\Vert z^{k}-z^{k-1}\Vert ^{2}\leq
\hat{L}(\hat{w}^{1})-(f^{\ast }+g^{\ast }+h^{\ast }):=M_{1}<\infty ,
\end{equation}%
which shows the boundedness of $\{\Vert z^{k}-z^{k-1}\Vert \}.$ Now,
let us assume that the function $h(z)-\beta _{0}\Vert \nabla
h(z)\Vert ^{2}$ is not coercive but the matrix $C$ keeps
nonsingular.
We then justify the boundedness of $\{z^{k}\}$ in this case. In effect, by using again Lemma \ref%
{Le3} and inequality \eqref{A1}, we get
\begin{equation*}
\Vert Ax^{k}+By^{k}+Cz^{k}+\frac{p^{k}}{\alpha }\Vert \leq \sqrt{\frac{%
M_{1}\alpha }{2}},
\end{equation*}%
and using the inequality
\begin{equation*}
\Vert Ax^{k}+By^{k}+Cz^{k}+\frac{p^{k}}{\alpha }\Vert \geq \Vert
Cz^{k}\Vert -\Vert Ax^{k}+By^{k}\Vert -\frac{1}{\alpha }\Vert
p^{k}\Vert ,
\end{equation*}%
we then have
\begin{equation}\label{B4}
\Vert Cz^{k}\Vert -\frac{1}{\alpha }\Vert p^{k}\Vert \leq \sqrt{\frac{%
M_{1}\alpha }{2}}+M_{2},
\end{equation}%
where $M_{2}:=\sup \Vert Ax^{k}+By^{k}\Vert .$ It thus follows from Eq. %
\eqref{D2} and condition (c3) that
\begin{align*}
\Vert p^{k}\Vert & \leq \Vert (C^{\intercal })^{-1}\Vert \Vert
C^{\intercal
}p^{k}\Vert =\Vert C^{-1}\Vert \Vert C^{\intercal }p^{k}\Vert \\
& \leq \Vert C^{-1}\Vert \Vert \nabla h(z^{k})+\nabla \varphi
(z^{k})-\nabla
\varphi (z^{k-1})\Vert \\
& \leq \Vert C^{-1}\Vert (\Vert \nabla h(z^{k})\Vert +\ell _{\varphi
}\Vert z^{k}-z^{k-1}\Vert ).
\end{align*}%
With any fixed $z^{\ast },$ we clearly have
\begin{align*}
\Vert \nabla h(z^{k})\Vert & =\Vert \nabla h(z^{k})-\nabla h(z^{\ast
})\Vert
+\Vert \nabla h(z^{\ast })\Vert \\
& \leq \ell _{h}\Vert z^{k}-z^{\ast }\Vert +\Vert \nabla h(z^{\ast
})\Vert\\
&  \leq \ell _{h}(\Vert z^{k}\Vert +\Vert z^{\ast }\Vert )+\Vert
\nabla h(z^{\ast })\Vert,
\end{align*}%
and furthermore,
\begin{equation*}
\Vert p^{k}\Vert \leq \Vert C^{-1}\Vert \left\{ \ell _{h}(\Vert
z^{k}\Vert +\Vert z^{\ast }\Vert )+\Vert \nabla h(z^{\ast })\Vert
+\ell _{\varphi }\Vert z^{k}-z^{k-1}\Vert \right\} .
\end{equation*}%
Hence we have
\begin{align*}
& \Vert Cz^{k}\Vert -\frac{1}{\alpha }\Vert p^{k}\Vert \geq
\frac{1}{\Vert
C^{-1}\Vert }\Vert z^{k}\Vert -\frac{1}{\alpha }\Vert p^{k}\Vert \\
& \geq \frac{1}{\Vert C^{-1}\Vert }\Vert z^{k}\Vert -\frac{\Vert
C^{-1}\Vert }{\alpha }\left\{ \ell _{h}(\Vert z^{k}\Vert +\Vert
z^{\ast }\Vert )+\Vert \nabla h(z^{\ast })\Vert +\ell _{\varphi
}\Vert z^{k}-z^{k-1}\Vert \right\}
\\
& \geq \left( \frac{1}{\Vert C^{-1}\Vert }-\frac{\Vert C^{-1}\Vert \ell _{h}%
}{\alpha }\right) \Vert z^{k}\Vert -\frac{\Vert C^{-1}\Vert }{\alpha
}(\ell _{h}\Vert z^{\ast }\Vert +\Vert \nabla h(z^{\ast })\Vert
)-\frac{\Vert C^{-1}\Vert \ell _{\varphi }}{\alpha }\Vert
z^{k}-z^{k-1}\Vert ,
\end{align*}%
which together with \eqref{B4} implies
\begin{align*}
\left( \frac{1}{\Vert C^{-1}\Vert }-\frac{\Vert C^{-1}\Vert \ell _{h}}{%
\alpha }\right) \Vert z^{k}\Vert & \leq \sqrt{\frac{M_{1}\alpha }{2}}+M_{2}+%
\frac{\Vert C^{-1}\Vert }{\alpha }(\ell _{h}\Vert z^{\ast }\Vert
+\Vert
\nabla h(z^{\ast })\Vert )+\frac{\Vert C^{-1}\Vert \ell _{\varphi }}{\alpha }%
\Vert z^{k}-z^{k-1}\Vert \\
& \leq \sqrt{\frac{M_{1}\alpha }{2}}+M_{2}+\frac{\Vert C^{-1}\Vert }{\alpha }%
\left( \ell _{h}\Vert z^{\ast }\Vert +\Vert \nabla h(z^{\ast })\Vert
+\ell _{\varphi }\sqrt{\frac{M_{1}}{\sigma _{1}}}\right) ,
\end{align*}%
where the last inequality follows from \eqref{A1}. By condition
(b4), the sequence $\{z^{k}\}$ is then bounded, and so is the
sequence $\{u^{k}\}$.
\end{proof}

\begin{remark}
It is easy to see that function $h(x) = \|Ax - b\|^2$ for any matrix
$A$ and $b$ satisfies conditions (b1) and (b3) with $\beta_0=
\frac{\|A\|^2}{4}$.
\end{remark}

\subsection{Main results}~\label{mainRes}

Combining theorems~\ref{T2} and ~\ref{T1}, we present the following
convergence theorem for the BADMM  with 3-block
procedure~\eqref{B7}.

\begin{theorem}\label{T3}
If Assumption 1 and conditions (b1)-(b4) in Theorem ~\ref{T1} are
satisfied, then the sequence $\{w^k\}$ generated by
procedure~\eqref{B7} converges to a stationary point of $L_{\alpha}$
defined as in \eqref{lg}.
\end{theorem}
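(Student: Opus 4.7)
The plan is to combine Theorems~\ref{T1} and~\ref{T2}, which between them already contain all the analytical work. First I would invoke Theorem~\ref{T1}: its hypotheses are (a1)--(a3) of Assumption~1 together with (b1)--(b4), and these are exactly what Theorem~\ref{T3} assumes; this yields at once that the sequence $\{u^k\}$ is bounded.

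Next I would verify that Theorem~\ref{T2} is applicable. Theorem~\ref{T2} requires Assumption~1 (given) together with Assumption~2, i.e.\ the lower bound $\alpha > 4[(\ell_h+\ell_\varphi)^2+\ell_\varphi^2]/(\mu_3\sigma_C)$. A brief comparison of constants shows that the threshold $\alpha_0$ in (b4) dominates the one in Assumption~2 in both branches of its definition: in the coercive branch the second argument of the max reproduces the Assumption~2 bound with $\mu_3=\mu_\varphi$, while in the square-$C$ branch one has $\|C^{-1}\|^2 = 1/\sigma_C$, so multiplying the corresponding term by $\|C^{-1}\|^2$ gives back exactly the Assumption~2 bound. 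Hence $\alpha>\alpha_0$ forces Assumption~2 to hold.

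With the boundedness of $\{u^k\}$ established in the first step and with Assumptions~1 and~2 in force from the second, Theorem~\ref{T2} applies directly and yields $\sum_k\|w^{k+1}-w^k\|_1<\infty$; in particular $\{w^k\}$ is a Cauchy sequence, and its limit is a stationary point of $L_\alpha$, which is exactly the conclusion of Theorem~\ref{T3}.

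The only place where any genuine checking occurs is the constant comparison between $\alpha_0$ and the threshold of Assumption~2 in the second step; this is routine bookkeeping rather than a conceptual obstacle, so I do not anticipate any real difficulty in executing this plan. The deeper work — the descent estimate for $\hat L$, the subgradient bound, and the K--{\L} argument for summability — has already been carried out inside Theorems~\ref{T1} and~\ref{T2}, so the present proof is purely a matter of assembly.
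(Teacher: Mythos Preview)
Your proposal is correct and matches the paper's approach exactly: the paper gives no explicit proof for Theorem~\ref{T3} at all, merely prefacing its statement with ``Combining theorems~\ref{T2} and~\ref{T1}, we present the following convergence theorem''. Your explicit verification that the threshold $\alpha_0$ in (b4) dominates the Assumption~2 bound (via $\|C^{-1}\|^2=1/\sigma_C$ in the square case) is a useful detail the paper leaves implicit; note, though, that the paper writes $\mu_\varphi$ in (b4) while Assumption~2 uses $\mu_3$, so the comparison is literal only when $\varphi$ (rather than $h$) is the strongly convex one --- a minor inconsistency in the paper, not in your argument.
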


We now extend this result to the $N$-block case. Thus, let us
consider the following composite optimization problem:
\begin{align}\label{p2}
\begin{split}
\min & \ f_1(x_1)+f_2(x_2)+\cdots f_N(x_N)  \\
 \mathrm{s.t.} & \  A_1x_1+A_2x_2+\cdots +A_Nx_N=0,
\end{split}
\end{align}
where $A_i\in\mathrm{R}^{m\times n_i}$,
$f_i:\mathrm{R}^{n_i}\to\mathrm{R}, i=1,2,\cdots, N-1$ are proper
lower semicontinuous functions, and
$f_N:\mathrm{R}^{n_N}\to\mathrm{R}$ is a smooth function. The
associated BADMM algorithm takes the form:
 \begin{align}\label{A2}
\begin{split}
\left\{
   \begin{array}{ll}
x_1^{k+1}& =\arg\min\limits_{x_1\in\mathrm{R}^{n_1}} L_{\alpha}(x_1,x_2^{k},\cdots,x_N^{k},p^k)+\triangle_{\phi_1}(x_1,x_1^k)  \\
\quad \vdots &=\qquad \vdots \qquad \vdots  \\
x_N^{k+1}& =\arg\min\limits_{x_N\in\mathrm{R}^{n_N}} L_{\alpha}(x_1^{k+1},\cdots,x_{N-1}^{k+1},x_N,p^k)+\triangle_{\phi_N}(x_N,x_N^k)  \\
p^{k+1} &=p^k+\alpha(A_1x_1^{k+1}+A_2x_2^{k+1}+\cdots+A_Nx_N^{k+1})
 \end{array} \right.
\end{split}
\end{align}
where $\triangle_{\phi_i}, i=1,2,\cdots, N $ are the Bregman
distances associated with  functions $\phi_i$ and the corresponding
Lagrangian function
 $L_{\alpha}:\mathrm{R}^{n_1}\times\mathrm{R}^{n_2}
 \times\cdots\times\mathrm{R}^{n_N}\times\mathrm{R}^{m}\to  \mathrm{R}$ is defined by
\begin{align}\label{lg2}
L_{\alpha}(x_1,x_2\cdots,x_N,p):=\sum_{i=1}^N f_i(x_i)+ \sum_{i=1}^N\langle p,
A_ix_i\rangle+\frac\alpha2\|\sum_{i=1}^N A_ix_i\|^2.
\end{align}
It is then straightforward  to establish a similar convergence
result with Theorem \ref{T3}.

\begin{theorem}\label{T4}
If the following (d1)-(d7) are satisfied:
\begin{itemize}
\item[(d1)] $\langle A_N A_{N}^{\intercal}x, x\rangle = \|x\|_{A_{N}^{\intercal}}^{2} \geq \sigma_{A_N} \| x\|^2,  \forall x \in \mathrm{R}^{n_N}$, namely, $A_N$ is full row rank;
\item[(d2)] $\nabla f_N, \nabla \phi_i, i=1,2,\cdots, N$ are Lipschitz continuous;
\item[(d3)] either $f_i$ or $\phi_i, i=1,2,\cdots, N$ is strongly
  convex;
\item[(d4)] $ f_1+f_2+\cdots +f_{N}$ is subanalytic and coercive;
\item[(d5)] $\inf f_i=f_i^*>-\infty, i=1,2,\cdots, N-1,$ and there exists $%
\beta _{0}>0$ such that $\inf \{f_N(x_N)-\beta _{0}\Vert \nabla
f_N(x_N)\Vert ^{2}\}=f_N^*>-\infty ;$

\item[(d6)]

either $f_N-\beta _{0}\Vert \nabla f_N\Vert ^{2}$ is coercive, or
$A_N$ is square;

\item[(d7)]  $\alpha > \alpha_0$ where,
\begin{equation*}
\alpha _{0}=\left\{
\begin{array}{cc}
\max \left( \frac{2}{\beta _{0}\sigma _{A_N}},\frac{4[(\ell
_{f_N}+\ell _{\phi_N})^{2}+\ell _{\phi_N}^{2}]}{\mu_{N}\sigma
_{A_N}}\right) , & if ~~f_N-\beta _{0}\Vert \nabla
f_N\Vert ^{2} \text{~is coercive, }\\
\Vert A_N^{-1}\Vert ^{2}\max \left( \ell_{f_N},\frac{4[(\ell
_{f_N}+\ell _{\phi_N})^{2}+\ell _{\phi_N}^{2}]}{\mu_{N}}\right),
& \text{if } A_N \text{ is square};%
\end{array}
\right.
\end{equation*}
where $\mu_N$ is the strong convexity coefficient of $f_N$ or
$\varphi_N$, and $\ell _{f_N} $ and $\ell _{\phi_N}$ are
respectively the Lipschitz coefficient of $\nabla f_N$ and $\nabla
\phi_N$,
\end{itemize}
then the sequence $\{x_1^k,x_2^k,\cdots,x_N^k,p^k\}$ converges to a
stationary point of $L_{\alpha}$ defined as in \eqref{lg2}.
\end{theorem}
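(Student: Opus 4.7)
The plan is to adapt the proof of Theorem \ref{T3} directly to the $N$-block setting, treating $x_N$ as the distinguished ``smooth'' block (playing the role of $z$) and letting the first $N-1$ blocks play a common role analogous to $(x,y)$. The conditions (d1)--(d7) mirror (a1)--(a4) of Assumption 1 together with (b1)--(b4) of Theorem \ref{T1}, so every ingredient of the three-block argument has an exact counterpart; what changes is only the number of summands that need to be tracked.

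First I would define the auxiliary Lyapunov function
\[
\hat L(\hat w) = L_\alpha(x_1,\ldots,x_N,p) + \sigma_0 \|x_N - \hat x_N\|^2,
\qquad \sigma_0 = \frac{2\ell_{\phi_N}^2}{\alpha\sigma_{A_N}},
\]
mirroring \eqref{eq1}. Applying Fermat's rule to the $x_N$-subproblem in \eqref{A2} and invoking the multiplier update produces
\[
\nabla f_N(x_N^{k+1}) + A_N^{\intercal}p^{k+1} + \nabla\phi_N(x_N^{k+1}) - \nabla\phi_N(x_N^k) = 0,
\]
which, combined with (d1)--(d2), reproduces the analogue of Lemma \ref{Le1}:
\[
\|p^{k+1}-p^k\|^2 \le \frac{2(\ell_{f_N}+\ell_{\phi_N})^2}{\sigma_{A_N}}\|x_N^{k+1}-x_N^k\|^2 + \frac{2\ell_{\phi_N}^2}{\sigma_{A_N}}\|x_N^k - x_N^{k-1}\|^2.
\]
Next, for each $i=1,\ldots,N$ condition (d3) provides strong convexity of either $f_i$ or $\phi_i$, and the argument of Lemma \ref{Le2} applied blockwise yields
\[
L_\alpha(x_1^{k+1},\ldots,x_i^{k+1},x_{i+1}^k,\ldots,x_N^k,p^k)\le L_\alpha(x_1^{k+1},\ldots,x_{i-1}^{k+1},x_i^k,\ldots,x_N^k,p^k) - \tfrac{\mu_i}{2}\|x_i^{k+1}-x_i^k\|^2.
\]
Telescoping over $i$, adding $\tfrac{1}{\alpha}\|p^{k+1}-p^k\|^2$ from the dual update, and absorbing the $p$-term via the Lemma \ref{Le1} analogue gives the $N$-block counterpart of Lemma \ref{Le3}:
\[
\hat L(\hat w^{k+1})\le \hat L(\hat w^k) - \sigma_1 \sum_{i=1}^N \|x_i^{k+1}-x_i^k\|^2,
\]
with $\sigma_1>0$ guaranteed by the lower bound on $\alpha$ embedded in (d7).

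Boundedness of $\{(x_1^k,\ldots,x_N^k)\}$ is obtained by duplicating the argument of Theorem \ref{T1}: conditions (d5)--(d6) together with the monotonicity of $\hat L$ force $\sum_{i<N} f_i(x_i^k)$ and $f_N(x_N^k)-\beta_0\|\nabla f_N(x_N^k)\|^2$ to stay bounded above, so the coercivity assumption in (d4) plus the dichotomy in (d6) yields boundedness of every block (using the nonsingular $A_N$ variant exactly as in \eqref{B4} when $f_N-\beta_0\|\nabla f_N\|^2$ is not coercive). The subdifferential estimate of Lemma \ref{Le5} is then generalised: applying Fermat's rule to each subproblem, substituting $p^k = p^{k+1}-\alpha\sum_i A_i x_i^{k+1}$, and using Lipschitz continuity of all $\nabla\phi_i$ gives
\[
\mathrm{dist}(0,\partial\hat L(\hat w^{k+1})) \le \kappa\Bigl(\sum_{i=1}^N \|x_i^{k+1}-x_i^k\| + \|x_N^{k-1}-x_N^k\|\Bigr),
\]
for a constant $\kappa>0$ independent of $k$. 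Finally, since $f_1+\cdots+f_N$ is subanalytic by (d4) and the remaining terms in $\hat L$ are analytic, $\hat L$ is subanalytic and thus satisfies the K-L inequality of Lemma \ref{KL}. The K-L argument of Theorem \ref{T2}---which only uses sufficient descent, the subdifferential estimate, and boundedness---transfers verbatim (with $N$ summands replacing three) to conclude $\sum_k \|w^{k+1}-w^k\|<\infty$ and hence convergence of $\{(x_1^k,\ldots,x_N^k,p^k)\}$ to a stationary point of $L_\alpha$.

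The main obstacle is not any single new estimate but the combinatorial bookkeeping of cross terms: each $x_i$-subproblem in \eqref{A2} sees all the $N-i$ successor blocks through the augmented Lagrangian, so the post-Fermat identities used to bound $\partial\hat L_{x_i}$ involve differences $A_i^{\intercal}A_j(x_j^{k+1}-x_j^k)$ for every $j>i$. I need to verify that assembling these into a single constant $\kappa$ in the subdifferential estimate really is uniform in $k$ (it is, because $\|A_i^{\intercal}A_j\|$ and $\ell_{\phi_i}$ are fixed), and that the telescoping argument in Theorem \ref{T2} still absorbs the extra $x_N^{k-1}$ lag term exactly as in the three-block case. All other steps are direct transcriptions of the three-block proof.
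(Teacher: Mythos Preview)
Your proposal is correct and follows essentially the same approach as the paper: the paper itself does not give a detailed proof of Theorem~\ref{T4} but simply states that ``it is then straightforward to establish a similar convergence result with Theorem~\ref{T3},'' and your outline is precisely that straightforward block-by-block transcription of Lemmas~\ref{Le1}--\ref{Le5} and Theorems~\ref{T1}--\ref{T2} with $x_N$ playing the role of $z$. The bookkeeping concern you flag about the cross terms $A_i^{\intercal}A_j(x_j^{k+1}-x_j^k)$ is indeed the only new feature, and your observation that these are absorbed into a uniform constant $\kappa$ is correct.
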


\begin{remark}
We notice that whenever any $f_i$ is strongly convex, the function
$\phi_i$ in the Bregman distance can be taken as zero in the $i$-th
update of procedure \eqref{A2}.
\end{remark}

\begin{remark} For convenience of applications, we list some
specifications of Theorem \ref{T4} as follows.

(i) {\em Underdetermined linear system of equations:} In this case,
$f_i\equiv0, i=1,2,\cdots,N$, and $m <\sum_{i=1}^N n_i$. The problem
\eqref{p2} is degenerated to
\begin{align}\label{p3}
\begin{split}
 \min & \qquad \qquad 0 \\
 \mathrm{s.t.} & \  A_1x_1+A_2x_2+\cdots + A_Nx_N=0
\end{split}
\end{align}
which amounts  to solving the underdetermined linear system of
equations:
\begin{align}\label{p6}
   Ax=0
\end{align}
where $A=[A_1, A_2,\cdots,A_N]$ and $x=\big[x_{1}^{\intercal},x_{2}^{\intercal},\cdots,x_{N}^{\intercal}\big]^{\intercal}$.
 In this
case, the BADMM algorithm takes the form:
\begin{align}\label{A3}
\begin{split}
\left\{
   \begin{array}{ll}
x_1^{k+1}& =\arg\min\limits_{x_1\in\mathrm{R}^{n_1}}\frac\alpha2\|A_1x_1+A_2x_2^k+\cdots+
A_Nx_N^k+\frac{p^k}{\alpha}\|^2+\triangle_{\phi_1}(x_1,x_1^k)  \\
\quad \vdots &=\qquad \vdots \qquad \vdots  \\
x_N^{k+1}& =\arg\min\limits_{x_N\in\mathrm{R}^{n_N}} \frac\alpha2\|A_1x_1^{k+1}+\cdots+
A_{N-1}x_{N-1}^{k+1}+A_Nx_N+\frac{p^k}{\alpha}\|^2+\triangle_{\phi_N}(x_N,x_N^k)  \\
p^{k+1} &=p^k+\alpha(A_1x_1^{k+1}+A_2x_2^{k+1}+\cdots+A_Nx_N^{k+1}).
 \end{array} \right.
\end{split}
\end{align}

We easily check that in this special case all the assumptions in
Theorem \ref{T4} are met whenever the matrix $A_N$ is nonsingular.
So, by Theorem \ref{T4}, the procedure \eqref{A3} can converge  to a
point $(x_1^*,x_2^*,\cdots,x_N^*,p^*)$. The point
$(x_1^*,x_2^*,\cdots,x_N^*)$  is clearly a solution of \eqref{p6} by
the last equation in \eqref{A3}. We notice that  the same problem
was studied by Sun, Luo and Ye \cite{sly}, and they considered the
case that $A$ is a square nonsingular matrix. To solve the linear
system of equations, they suggested a novel  randomly permuted ADMM
and proved its expected convergence.

 (ii) {\em Two blocks case: $N=2$.} It is easily  seen that Theorem
 \ref{T4} in this case is degenerated to convergence of the conventional BADMM procedure:
 \begin{align}\label{A4}
\begin{split}
\left\{
   \begin{array}{ll}
x_1^{k+1}& =\arg\min\limits_{x_1\in\mathrm{R}^{n_1}} L_{\alpha}(x_1,x_2^{k},p^k)+\triangle_{\phi_1}(x_1,x_1^k)  \\
x_2^{k+1}& =\arg\min\limits_{x_2\in\mathrm{R}^{n_2}} L_{\alpha}(x_1^{k+1},x_2,p^k)+\triangle_{\phi_2}(x_2,x_2^k)  \\
p^{k+1} &=p^k+\alpha(A_1x_1^{k+1}+A_2x_2^{k+1})
 \end{array} \right.
\end{split}
\end{align}
for the problem:
\begin{align}\label{p4}
\begin{split}
 \min & \ f_1(x_1)+f_2(x_2)\\
 \mathrm{s.t.} & \  A_1x_1+A_2x_2=0.
\end{split}
\end{align}
Thus, Theorem \ref{T4} includes the results established in
\cite{lp,wxx} as special cases.

(iii) {\em The unconstrained minimization case:}
\begin{align}\label{p5}
\min f_1(x_1)+f_2(x_2)+\cdots f_N(x_N)
\end{align}
where $f_i:\mathrm{R}^{n_i}\to\mathrm{R}, i=1,2,\cdots, N-1$ are
proper lower semicontinuous functions, and
$f_N:\mathrm{R}^{n_N}\to\mathrm{R}$ is a smooth function. Even no
constraint exists in this case, a similar Bregman alternative
direction method (BADM) can be defined as follows:
\begin{align}\label{A5}
\begin{split}
\left\{
   \begin{array}{ll}
x_1^{k+1}& =\arg\min\limits_{x_1\in\mathrm{R}^{n_1}}f_1(x_1)+\triangle_{\phi_1}(x_1,x_1^k)  \\
\quad \vdots &=\qquad \vdots \qquad \vdots  \\
x_N^{k+1}& =\arg\min\limits_{x_N\in\mathrm{R}^{n_N}} f_N(x_N)+
\triangle_{\phi_N}(x_N,x_N^k).
 \end{array} \right.
\end{split}
\end{align}
Following exactly the procedure of proof of  Theorems \ref{T2} and
\ref{T1}, we can immediately obtain the following  convergence of
 \eqref{A5} in the setting that:
\begin{itemize}
\item[(e1)] $\inf f_i=f_i^*>-\infty, i=1,2,\cdots, N$;

\item[(e2)] $\nabla f_N, \nabla \phi_i, i=1,2,\cdots, N$ are Lipschitz continuous;

\item[(e3)] either $f_i$ or $\phi_i, i=1,2,\cdots, N$ is strongly
  convex;

\item[(e4)] $ f_1+f_2+\cdots +f_{N}$ is subanalytic and coercive.
\end{itemize}
\end{remark}

\section{Demonstration examples}
In this section, a simulated example and a real-world application are provided to support
the correctness of convergence of the proposed 3-block Bregman ADMM for solving non-convex composite problems.

Consider the  non-convex optimization problem with 3-block variables
deduced from matrix decomposition applications~(see e.g.
\cite{br,xcm,zhou2010}):
\begin{equation}
\label{rpca_model}
\begin{split}
&\underset{\mathbf{L},\mathbf{S},\mathbf{T}}{\min} ~\| \mathbf{L}\|_{*} + \lambda \| \mathbf{S}\|_{1/2}^{1/2} + \frac{\mu}{2} \| \mathbf{T}- \mathbf{M}\|_{F}^{2} \\
&s.t.~ \mathbf{T} = \mathbf{L} +\mathbf{S},
\end{split}
\end{equation}
where $\mathbf{M}$, $\mathbf{T}$, $\mathbf{L}$ and $\mathbf{S}$ are all $m\times n$ matrices,
$\mathbf{M}$ is a given observation, $\mathbf{T}$ is an ideal observation, $\|\mathbf{L}\|_{*}:=\sum_{i=1}^{\min(m,n)}\sigma_{i}(\mathbf{L})$
is the nuclear norm of $\mathbf{L}$, $\| \mathbf{S}\|_{1/2}^{1/2}:=\sum_{i=1}^{m}\sum_{j=1}^{n} |\mathbf{S}_{ij}|^{1/2}$ is the $\ell_{1/2}$ quasi-norm of $\mathbf{S}$, $\lambda$ is a trade-off parameter between the spectral sparsity term $\| \mathbf{L}\|_{*}$ and the element-wise sparsity term $\|\mathbf{S}\|_{1/2}^{1/2}$, and $\mu$ is a parameter associated with the noise level. The augmented Lagrange function of this optimization problem is given by
\begin{equation}
L_{\alpha}(\mathbf{L},\mathbf{S},\mathbf{T},\mathbf{\Lambda})= \| \mathbf{L}\|_{*} + \lambda \| \mathbf{S}\|_{1/2}^{1/2} + \frac{\mu}{2} \| \mathbf{T}- \mathbf{M}\|_{F}^{2}+\langle \mathbf{p}, \mathbf{T} - (\mathbf{L}+\mathbf{S}) \rangle + \frac{\alpha}{2}\| \mathbf{T} - (\mathbf{L}+\mathbf{S})\|_{F}^{2}.
\end{equation}
According to the 3-block BADMM~\eqref{B7}, the optimization
problem~(\ref{rpca_model}) can be solved by the following procedure
\begin{align}\label{subProb4}
\begin{split}
\left\{
   \begin{array}{ll}
\mathbf{L}^{k+1} = \underset{\mathbf{L}}{\arg\min}~L_{\alpha}(\mathbf{L},\mathbf{S}^{k},\mathbf{T}^{k},\mathbf{\Lambda}^{k}) + \triangle_{\phi}(\mathbf{L},\mathbf{L}^{k}) \\
\mathbf{S}^{k+1} = \underset{\mathbf{S}}{\arg\min}~L_{\alpha}(\mathbf{L}^{k+1},\mathbf{S},\mathbf{T}^{k},\mathbf{\Lambda}^{k}) + \triangle_{\psi}(\mathbf{S},\mathbf{S}^{k})  \\
\mathbf{T}^{k+1} = \underset{\mathbf{T}}{\arg\min}~L_{\alpha}(\mathbf{L}^{k+1},\mathbf{S}^{k+1},\mathbf{T},\mathbf{\Lambda}^{k}) + \triangle_{\varphi}(\mathbf{T},\mathbf{T}^{k}) \\
\mathbf{p}^{k+1} = \mathbf{p}^{k} + \alpha(\mathbf{T}^{k+1} - (\mathbf{L}^{k+1}+\mathbf{S}^{k+1})).
 \end{array} \right.
\end{split}
\end{align}
Specifying $\phi(\cdot)=\psi(\cdot)= \frac{\gamma_1}{2}\|\cdot\|^2$,
$\varphi(\cdot)=\frac{\gamma_2}{2}\|\cdot\|^2$ and substituting
these formulations into the procedure~\eqref{subProb4}, we then
obtain the following closed-form iterative formulas of
\eqref{subProb4}:
\begin{align}\label{subForm}
\begin{split}
\left\{
   \begin{array}{ll}
\mathbf{L}^{k+1} = \mathcal{S}_{M}(\frac{\alpha(\mathbf{T}^{k}-\mathbf{S}^{k} + \frac{\mathbf{p}^{k}}{\alpha})+\gamma_1\mathbf{L}^{k}}{\alpha + \gamma_1}, ~\frac{\gamma_1}{\alpha+\gamma_1}) \\
\mathbf{S}^{k+1} = \mathcal{H}_{E}(\frac{\alpha(\mathbf{T}^{k}-\mathbf{L}^{k+1} + \frac{\mathbf{p}^{k}}{\alpha})+\gamma_1\mathbf{S}^{k}}{\alpha+\gamma_1}, ~\frac{\lambda}{\alpha+\gamma_1})  \\
\mathbf{T}^{k+1}= \frac{\mu \mathbf{M} + \alpha(\mathbf{L}^{k+1} +\mathbf{S}^{k+1} - \frac{\mathbf{p}^{k}}{\alpha}) + \gamma_2\mathbf{T}^{k}}{\mu + \alpha + \gamma_2} \\
\mathbf{p}^{k+1} = \mathbf{p}^{k} + \alpha(\mathbf{T}^{k+1} - (\mathbf{L}^{k+1}+\mathbf{S}^{k+1}))
 \end{array} \right.
\end{split}
\end{align}
where $\mathcal{S}_{M}(\mathbf{A},~\cdot)$ indicates the operation
of thresholding the singular values of matrix $\mathbf{A}$ using the
well-known soft shrinkage operator, and
$\mathcal{H}_{E}(\mathbf{A},~\cdot)$  the operation of thresholding
the entries of matrix $\mathbf{A}$ using the half shrinkage
operator~\cite{xc,zlx,zfx,zxz}. The procedure~\eqref{subForm} is the
specification  of BADMM \eqref{B7} for the solution of
problem~\eqref{rpca_model} with functions $f(x)$, $g(y)$, $h(z)$
defined by $f(\mathbf{L})=\|\mathbf{L}\|_{*}$,
$g(\mathbf{S})=\lambda\|\mathbf{S}\|_{1/2}^{1/2}$,
$h(\mathbf{T})=\frac{\mu}{2}\|\mathbf{T} - \mathbf{M}\|^2$ and
matrices $A$, $B$, $C$ defined by $A = I$, $B=-I$, $C=-I$ where $I$
is the identity matrix. It is direct to see that all the assumptions
of Theorem~\ref{T3} are satisfied. Consequently, Theorem~\ref{T3}
can be applied to predict
  convergence of~\eqref{subForm} in theory. We conduct a
simulation study and an application example below for support of
such theoretical assertion.

We first expatiate some implementation issues. We set $\gamma_1
=\alpha$ and $\gamma_2 =\alpha + \mu$ in \eqref{subForm}. In order
to avoid the tediousness of tuning the parameter $\alpha$, we
exploit a dynamic updating scheme, e.g., $\alpha = \min( \alpha*1.1,
\alpha_{max})$, where $\alpha_{max}$ is a very large constant. Due
to the non-convexity of this optimization problem it is very
important to choose a suitable initialization. In the following
experiments, we initialized matrix $\mathbf{L}$ by the best rank $r$
approximation of matrix $\mathbf{M}$, i.e.,
$\mathbf{L}=\text{SVD}(\mathbf{M},r)$, where $r$ was empirically set
as $\text{ceil}(0.01\cdot\min(m,n))$; initialized matrix
$\mathbf{S}$ as one zero matrix of size $m \times n$; and then
initialized matrix $\mathbf{T}= \mathbf{L} + \mathbf{S}$. Finally,
we terminated the algorithm by the criterion $\text{relChg} < 1$e-8,
where $\text{relChg}$ is defined as
\begin{equation*}
\text{relChg}:=\frac{\| [\mathbf{L}^{k+1}-\mathbf{L}^{k},\mathbf{S}^{k+1}-\mathbf{S}^{k},\mathbf{T}^{k+1}-\mathbf{T}^{k} ]\|_{F}}{\|[\mathbf{L}^{k},\mathbf{S}^{k},\mathbf{T}^{k}]\|_F +1 }.
\end{equation*}

\begin{figure}[htp]
\centering
\includegraphics[width=5in,height=2.2in]{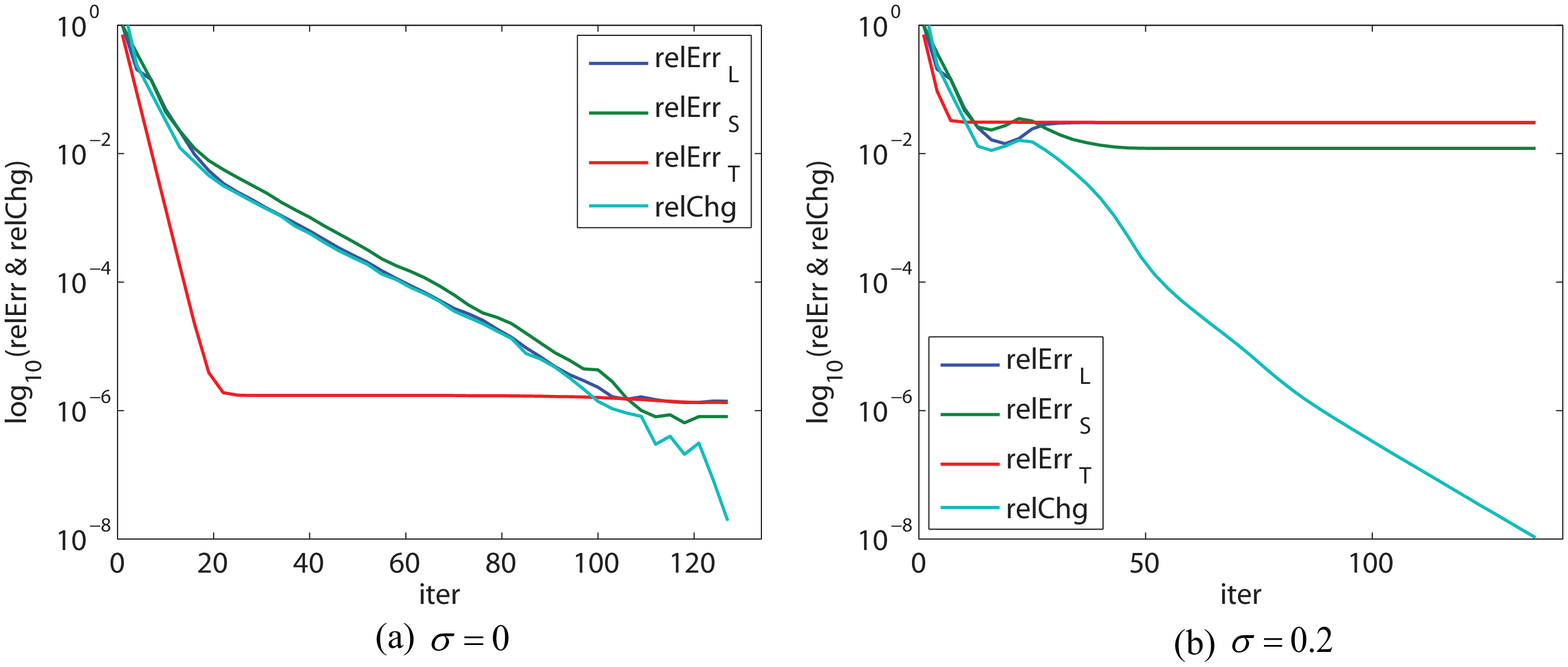}
\caption{Separation results in simulated data.}
\label{simuFig} 
\end{figure}
(a)~\textbf{Simulation study.} To check the validity of
model~\eqref{rpca_model} and the convergence of
procedure~\eqref{subForm}, we generated an observation matrix
$\mathbf{M}$ from given $\mathbf{L}$ and $\mathbf{S}$ (namely, the
true solution) with Gaussian random disturbance $\mathbf{N}$, and
then we applied procedure~\eqref{subForm} to recover $\mathbf{L}$
and $\mathbf{S}$. The square matrices of size $m \times m$ are
randomly generated for our simulations. The matrix $\mathbf{L}$ was
taken as $\mathbf{U}\mathbf{V}^{T}$, where $\mathbf{U}$ and
$\mathbf{V}$ are independent $m\times r$ matrices whose elements are
i.i.d. Gaussian random variables with zero mean and unit variance,
and $\mathbf{S}$ taken as a sparse matrix whose support was chosen
uniformly at random with the entries uniformly specified in the
interval $[-50,50]$. Then, the measurement $\mathbf{M}$ was
generated as $\mathbf{M}=\mathbf{L}+ \mathbf{S} + \mathbf{N}$, where
matrix $\mathbf{N}$ is Gaussian noise with mean zero and variance
$\sigma^2$. Thus, $\sigma=0$ corresponds to the no noise case and
$\sigma \neq 0$ corresponds to the noisy case. In simulations, the
parameter $\mu$ in model~\eqref{rpca_model} was set as a large value
$\text{1e+4}$ in the no noise setting, and a value in the noisy
setting from a candidate set such that the proposed algorithm has
the best performance. The parameter $\lambda$ was empirically set as
the value $\frac{60}{\max(m,n)}$. The performance of the algorithm
is then measured in terms of the relative error defined by
$$\text{relErr}_{\text{\:A}}:=\frac{\| \mathbf{\hat{A}} -\mathbf{A}^{*}\|_{F}}{\| \mathbf{A}^{*}\|_F},$$
where $\mathbf{\hat{A}}$ indicates the recovery result of the algorithm, and $\mathbf{A}^{*}$ indicates the true result.

\begin{figure}[htp]
\centering
\includegraphics[width=5in,height=3in]{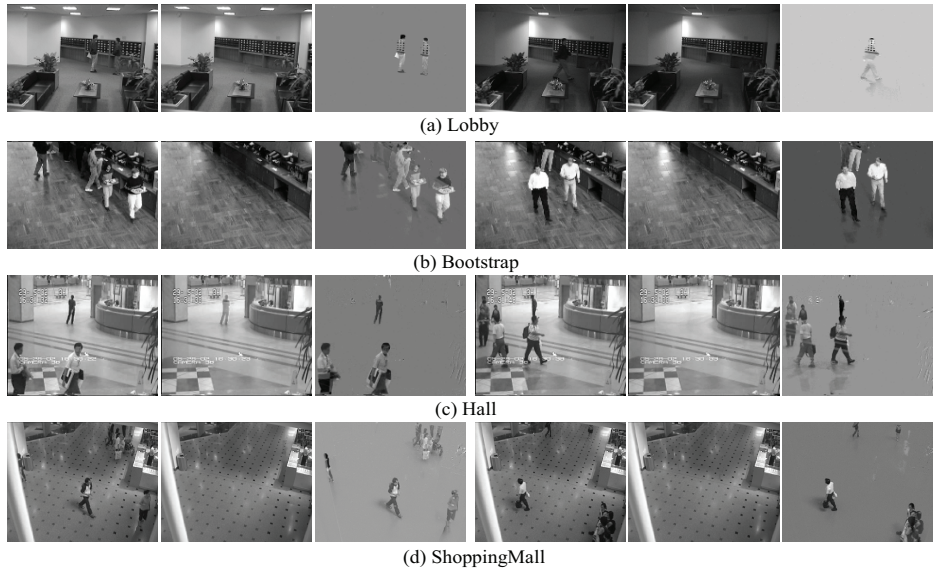}
\caption{Separation results in real-world video clips.}
\label{visualFig}
\end{figure}
With the above settings and measure, our simulation results are then
shown in Figure~\ref{simuFig}. In Figure~\ref{simuFig}(a), they are
exhibited the curves of the relative error
$\text{relErr}_{\mathbf{A}}\: (\mathbf{A}:=\mathbf{L},
\mathbf{S},\mathbf{T})$ and the relative change $\text{relChg}$ with
respect to the iterative steps when no Gaussian noise is added, and
in Figure~\ref{simuFig}(b)   the curves when Gaussian noise is added
with mean 0 and variance $\sigma^2=0.2^2$. From these curves, it can
be seen that under the initialization in terms of the relative error
and the relative change the procedure~\eqref{subForm} does converge,
as predicted.

(b)~\textbf{An application example.} We further applied the
model~\eqref{rpca_model} with BADMM~\eqref{subForm} to the
background subtraction application. Background
subtraction~\cite{Bouwmans2014} is a fundamental task in the field
of video surveillance. Its aim is to subtract the background from a
video clip and meanwhile detect the anomalies (i.e., moving
objects). From the
webpage~\footnote{http://perception.i2r.a-star.edu.sg/bk\_model/bk\_index},
we first download four video clips: Lobby, Bootstrap, Hall, and
ShoppingMall. Then we chose 600 frames from each video clip and
input these 600 frames into our algorithm. The parameter $\lambda$
was set as the value $\frac{50}{\max(m,n)}$. In
Figure~\ref{visualFig}, we exhibit the separation results of some
frames in four video clips. From Figure~\ref{visualFig}, it can be
seen that our algorithm can produce a clean video background and
meanwhile detect a satisfactory video foreground, which supports the
validity and convergence of the proposed BADMM.

\section*{Acknowledgement}
We will thank Dr. Yao Wang for fruitful conversations. This work was partially supported by the
National 973 Programs (Grant No. 2013CB329404),  the Key Program of
National Natural Science Foundation of China (Grant No. 11131006)
and the National Natural Science Foundation of China (Grant No. 111301253).


\begin{thebibliography}{99}

\bibitem{Al96} H. Attouch, J. Bolte, B.F. Svaiter, Convergence
of descent methods for semi-algebraic and tame
problems: proximal algorithms, forward-backward splitting, and
regularized Gauss-Seidel methods, Mathematical Programming,
137(1-2): 91-129, 2013.

\bibitem{Ban} A. Banerjee, S. Merugu, I. Dhillon,
et al., Clustering with Bregman divergences,
 Journal of Machine Learning Research, 6: 1705--1749, 2005.

\bibitem{br} B. Behmardi, R. Raich, On provable exact low-rank recovery in topic models,
 Statistical Signal Processing Workshop (SSP), IEEE.  265-268, 2011.


\bibitem{bdo} J. Bolte, A. Daniilidis, A. Lewis,
The {\L}ojasiewicz inequality for nonsmooth subanalytic functions
with applications to subgradient dynamical systems, SIAM J. Optim.,
17:1205--1223, 2007.

\bibitem{bst}J. Bolte, S. Sabach, M. Teboulle,
Proximal alternating linearized minimization for
nonconvex and nonsmooth problems, Mathematical Programming,
146(1-2):459--494, 2014.

\bibitem{Bouwmans2014}T.~Bouwmans, Traditional and recent approaches in background modeling for foreground detection: An overview, Computer Science Review, 2014.

\bibitem{bpc} S. Boyd, N. Parikh, E. Chu, et al., Distributed optimization and
statistical learning via the alternating direction method of
multipliers, Foundations and Trends in Machine Learning, 3(1):
1--122, 2011.




\bibitem{berg} L. M. Bregman, The relaxation method of finding the common
 points of convex sets and its application to the solution of problems in convex
 programming, USSR Computational Mathematics and Mathematical Physics, 7(3): 200--217, 1967.

\bibitem{chy}X. Cai, D. Han, X. Yuan, The direct extension of ADMM for three-block separable
 convex minimization models is convergent when one function is strongly convex,
 Optimization Online, 2014.

\bibitem{cw} E. J. Cand\`{e}s, M. B. Wakinm, S.P.
Boyd, Enhancing sparsity by reweighted $\ell_1$ minimization.
Journal of Fourier Analysis and Applications, 14(5): 877--905, 2008.

\bibitem{c} R.
Chartrand, Exact reconstruction of sparse signals via nonconvex
minimization, IEEE Signal Processing Letters, 14(10): 707-710, 2007.

\bibitem{cs} R. Chartrand, V. Staneva, Restricted isometry properties and
nonconvex compressive sensing, Inverse Problems, 24: 1-14, 2008.

\bibitem{ch} C. Chen, B. He, Y. Ye, et al. The direct extension of ADMM for multi-block
convex minimization problems is not necessarily convergent,
 Mathematical Programming, 2014: 1--23.

\bibitem{ct} G. Chen, M. Teboulle, A proximal-based decomposition method for convex
minimization problems, Mathematical Programming, 64(1-3): 81--101,
1994.

\bibitem{cz} X. Chen, M. Zhou, Convergence of reweighted l1 minimization
algorithms and unique solution of truncated lp minimization,
 Department of Applied Mathematics, The Hong Kong Polytechnic University, 2010.

\bibitem{dy} W. Deng, W. Yin, On the global linear convergence of
alternating direction methods,  2012, preprint.

\bibitem{eck}  J. Eckstein,  Splitting methods for monotone operators with
applications to parallel optimization, 1989, Ph.D Thesis, Operations
Research Center, MIT.

\bibitem{ecb}  J. Eckstein,  D. P. Bertsekas,  On the Douglas-Rachford
splitting method and the proximal point algorithm for maximal
monotone operators, Mathematical Programming,  55(1):293--318, 1992.

\bibitem{ess} E. Esser, Applications of Lagrangian-based
alternating direction methods and connections to split Bregman, CAM
report 9:31, 2009.

\bibitem{fwb} Q. Fu, H. Wang, A. Banerjee, Bethe-ADMM for tree
decomposition based parallel MAP inference,
 arXiv preprint arXiv:1309.6829, 2013.

\bibitem{gm2}  D. Gabay and B. Mercier, A dual algorithm for the solution of
nonlinear variational problems via finite element approximation,
Computers \& Mathematics with Applications,  2:17--40, 1976.

\bibitem{gm}  R. Glowinski, A. Marroco, Sur lapproximation par elements
finis dordre un, et la resolu- tion par penalisation-dualite dune
classe de problemes de Dirichlet nonlineaires, Rev. Francaise dAut.
Inf. Rech. Oper., R-2:41--76, 1975.

\bibitem{gds} T. Goldstein, B. O. Donoghue,  S. Setzer, Fast alternating
direction optimization methods, UCLA CAM technical report, 2012.



\bibitem{hy} D.  Han, X. Yuan, A note on the alternating direction method of multipliers, Journal
of Optimization Theory and Applications, 155(1):227--238, 2012.

\bibitem{hy} B. He, X. Yuan,  On the $\mathcal{O}(1/n)$ convergence rate of the
Douglas-Rachford alternating direction method,  SIAM Journal on
Numerical Analysis,  50(2):700--709, 2012.

\bibitem{hcw} M. Hong, T. Chang, X. Wang et al.,
 A block successive upper bound minimization method of multipliers
for linearly constrained convex optimization, 2013, Preprint,
arXiv:1401.7079.

\bibitem{hlr} M. Hong, Z. Luo, M. Razaviyayn,
Convergence Analysis of Alternating Direction Method of Multipliers
for a Family of Nonconvex Problems, 2014, Preprint, arXiv:1410.1390



\bibitem{kab} A. Kaban, Fractional norm regularization: learning with very few
relevant features, IEEE Trans. Neural Networks and Learning Systems,
24(6): 953-963, 2013.

\bibitem{kur} K. Kurdyka, On gradients of functions definable in o-minimal structures, Annales de
l’institut Fourier (Grenoble) 48(3):769--783, 1998.

\bibitem{lw} M. Lai, J.  Wang,   An unconstrained $l_q$ minimization with
 $0 < q\le1$ for sparse solution of underdetermined
linear systems, SIAM J. Optim., 21, 82--101, 2010.

\bibitem{lp} G. Li, T.K. Pong, Splitting methods for nonconvex composite optimization,
arXiv:1407.0753v2

\bibitem{lst} M. Li, D. Sun, K. C. Toh, A convergent 3-block semi-proximal ADMM for convex minimization problems
 with one strongly convex block, to appear in Asia-Pacific Journal of Operational Research.

\bibitem{ls} A. Liavas, N. Sidiropoulos, Parallel Algorithms for Constrained
Tensor Factorization via the Alternating Direction Method of
Multipliers, arXiv preprint arXiv:1409.2383, 2014.


\bibitem{yl}J. Liu, L. Yuan, J. Ye, Dictionary LASSO: Guaranteed Sparse Recovery
 under Linear Transformation, arXiv:1305.0047, 2013.

\bibitem{lu} Z. Lu, Iterative reweighted minimization methods
for $l_p$ regularized unconstrained nonlinear programming,
Mathematical Programming, 147:277-307, 2014.


\bibitem{ms} R. Monteiro,  B. Svaiter, Iteration-complexity of
block-decomposition algorithms and the alternating direction method
of multipliers, SIAM Journal on Optimization,  23(1):475--507, 2013.

\bibitem{mor} B. Mordukhovich, Variational Analysis and Generalized Differentiation, I: Basic Theory,
 Springer-Verlag, Berlin, 2006.




\bibitem{loj} S.  {\L}ojasiewicz, Une propri \'{e}t\'{e} topologique des sous-ensembles analytiques r′eels, Les
\'{E}quations aux D\'{e}riv\'{e}es Partielles, \'{E}ditions du
Centre National de la Recherche Scientifique Paris, 87--89, 1963


\bibitem{qjz} Y. Qian, S. Jia, J. Zhou, et al., Hyperspectral
unmixing via $L_{1/2}$ sparsity-constrained nonnegative matrix
factorization, IEEE Transaction on Geoscience and Remote Sensing,
49(11): 4282-4297, 2011.

\bibitem{rof} L. Rudin, S.  Osher,  E. Fatemi,  Nonlinear total variation based noise
removal algorithms,  Physica D,  60:259--268, 1992.

\bibitem{sly} R. Sun,  Z. Q. Luo, Y. Ye,  On the expected convergence of randomly
permuted ADMM,  arXiv:1503.06387, 2015.

\bibitem{wb} H. Wang, A. Banerjee, Online alternating direction method. In International Conference on Machine
Learning (ICML), 2012.

\bibitem{wang} H. Wang, A. Banerjee, Bregman Alternating Direction Method of Multipliers,
 Neural Information Processing System (NIPS), 2014.

\bibitem{wxx}F. H. Wang, Z. B. Xu, H. K. Xu, Convergence of alternating direction method with multipliers for
non-convex composite problems, arXiv preprint arXiv:1410.8625, 2014.

\bibitem{wy} J. Wright, A.  Yang, A. Ganesh, et al., Robust face recognition via sparse
representation, IEEE  Transactions on Pattern Analysis and Machine
Intelligence, 31(2): 210-227, 2008.

\bibitem{xcm}  H. Xu, C. Caramanis , S. Mannor,  Outlier-robust pca: The
high-dimensional case, IEEE Transactions on
 Information Theory, 59(1): 546-572, 2013.


\bibitem{xyw} Y. Xu, W. Yin, Z. Wen, et al, An alternating direction algorithm for
matrix completion with nonnegative factors, Frontiers of Mathematics
in China, 2012, 7(2): 365-384.

\bibitem{yin} Y. Xu, W. Yin,
A block coordinate descent method for regularized multiconvex
optimization with applications to nonnegative tensor factorization
and completion, SIAM J. Imaging Sciences, 6(3): 1758--1789, 2013.


\bibitem{xc} Z. B. Xu, X. Y. Chang, et al., $L_ {1/2} $
regularization: a thresholding representation theory and a fast
solver, IEEE Transactions on Neural Networks and Learning Systems,
23: 1013-1027, 2012.


\bibitem{zlx}
J. S. Zeng, S. B. Lin, Y. Wang, Z. B. Xu, $L_ {1/2} $ Regularization:
Convergence of Iterative Half Thresholding Algorithm, IEEE
Transactions on Signal Processing, 62(9):2317--2329, 2014.

\bibitem{zfx} J. S. Zeng, J. Fang,  Z. B. Xu, Sparse SAR imaging based on $L_{1/2}$
regularization, Sciences China F, 55: 1755-1775, 2012.

\bibitem{zxz} J. S. Zeng, Z. B. Xu, B. Zhang, et al.,
Accelerated $L_{1/2}$ regularization based SAR imaging via BCR and
reduced Newton skills, Signal Processing, 93: 1831-1844, 2013.

\bibitem{zk} R. Zhang, J. Kwok, Asynchronous distributed ADMM for consensus
optimization, Proceedings of the 31st International Conference on
Machine Learning (ICML-14). 2014: 1701--1709.

\bibitem{z} T. Zhang,
Analysis of multi-stage convex relaxation for sparse regularization,
Journal of Machine Learning Research, 11: 1081-1107, 2010.

\bibitem{zy} Y. Zhang,  An alternating direction algorithm for nonnegative matrix
factorization, preprint, 2010.

\bibitem{zhang} X. Zhang, M. Burger, S. Osher, A unified primal-dual
algorithm framework based on Bregman iteration,
 Journal of Scientific Computing, 46(1): 20--46, 2011.

\bibitem{zhou2010}Z. Zhou, X. Li, J. Wright, et al. Stable principal component pursuit. 2010 IEEE International Symposium on Information Theory Proceedings (ISIT), pp. 1518-1522,  2010.
\end{thebibliography}
\end{document}